\newtheorem{thm}{Theorem}[section]
 \newtheorem{lem}[thm]{Lemma}
 \newtheorem{prop}[thm]{Proposition}
 \theoremstyle{definition}
 \newtheorem{defn}[thm]{Definition}
  \newtheorem{ex}[thm]{Example}
 \theoremstyle{remark}
\title[Metric space valued Sobolev maps via weak* derivatives]{An approach to metric space valued Sobolev maps via weak* derivatives}
\author{Paul Creutz}
	\address{Department of Mathematics and Computer Science, University of Cologne, Weyertal 86-90, 50931 K\"oln, Germany.}
	\email{pcreutz@math.uni-koeln.de}
\author{Nikita Evseev}
	\address{Sobolev Institute of Mathematics, 4 Academic Koptyug avenue,  630090 Novosibirsk, Russia.
	}
	\email{evseev@math.nsc.ru}
\thanks{The first named author was partially supported by the DFG-grant SFB/TRR 191 ``Symplectic structures in Geometry, Algebra
and Dynamics."} 
\begin{document}

\maketitle
\begin{abstract}
We give a characterization of metric space valued Sobolev maps in terms of weak* derivatives. This corrects a previous result by Haj{\l}asz and Tyson.
\end{abstract}

\section{Introduction}
\subsection{Objective}
This article concerns possible definitions of the first-order Sobolev space $W^{1,p}(\Omega ;X)$ for an open subset $\Omega \subset \mathbb{R}^n$, a metric space $X$ and a coefficient $p\in (1,\infty)$. 
Since the early 1990's several definitions of such Sobolev spaces have been proposed in \cite{KS:93,  Haj:96, Reshetnyak97,Che:99, Sha:00, HK:00, AGS:14}. 
Many of these make sense when $\Omega$ is an arbitrary metric measure space and, in such generality, the arising Sobolev space may depend on the chosen definition. 
However, for bounded domains $\Omega\subset \mathbb{R}^n$, all of these definitions are equivalent, see  
\cite{Reshetnyak2004, AGS:13, HKST2015}. The mentioned characterizations of $W^{1,p}(\Omega ;X)$ take very different approaches that mostly involve slightly advanced concepts such as energy, modulus of curve families or Poincaré inequalities. Hence, from the point of view of classical analysis, all these characterizations might either seem a bit complicated or at least not very straightforward. 
Another definition of the Sobolev space $W^{1,p}(\Omega ;X)$ was proposed in~\cite{HT:08} which is more similar to the traditional definition of classical Sobolev spaces in terms of weak derivatives. 
Our first main result, Theorem~\ref{thm:triv} below, however shows that for technical reasons the space $W^{1,p}(\Omega ;X)$ as introduced in \cite{HT:08} is essentially empty. The main objective of this article is then to propose a variation on the definition from \cite{HT:08} and show that this new definition indeed gives an equivalent characterization of the Sobolev spaces introduced in \cite{KS:93,  Haj:96, Reshetnyak97,Che:99, Sha:00, HK:00, AGS:14}.
\subsection{Definitions and main results}
If $X$ is a Riemannian manifold then, by Nash's theorem, there is a Riemannian isometric embedding $\iota \colon X \to \mathbb{R}^N$. In this case $W^{1,p}(\Omega;X)$ can be defined as the set of those functions $f\colon \Omega \to X$ for which the composition~$\iota \circ f$ lies in the classical Sobolev space $W^{1,p}(\Omega; \mathbb{R}^N)$. Similarly one can embed any metric space $X$ isometrically into some Banach space $V$ as to force a linear structure on the target space. 
For example every separable metric space embeds isometrically into $\ell^\infty$ by means of the Kuratowski embedding. Thus it is natural to first define Sobolev functions with values in the Banach space $V$ and then $W^{1,p}(\Omega ; X)$ as the subspace of those functions in $W^{1,p}(\Omega ; V)$ that take values in~$X$ with respect to the fixed embedding. The following definition of Banach space valued Sobolev functions goes back to~\cite{Sob:63}.

\begin{defn}
\label{fd}
Let $V$ be a Banach space and $p\in [1,\infty)$. The space $ L^{p}(\Omega; V)$ consists of those functions $f\colon \Omega\to V$ that are measurable and essentially separably valued, and for which the function $x \mapsto ||f(x)||$ lies in $L^p(\Omega)$.\\
A function $f$ lies in the Sobolev space $W^{1,p}(\Omega ; V)$ if $f\in L^p(\Omega ;V)$ and for every $j=1, \dots , n$ there is a function $f_j\in L^p(\Omega;V)$ such that
\begin{equation}
\int_\Omega\frac{\partial \varphi}{\partial x_j}(x)\cdot f(x)\ \textrm{d}x 
= -\int_\Omega\varphi(x)\cdot f_j(x) \ \textrm{d}x \quad \text{ for every } \varphi \in C^\infty_0(\Omega)
\end{equation}
in the sense of Bochner integrals.
\end{defn}
 It was claimed in \cite{HT:08} that if $Y$ is separable then $W^{1,p}(\Omega ; Y^*)$ is equal to the Reshetnyak-Sobolev space $R^{1,p}(\Omega ; Y^*)$ introduced in \cite{Reshetnyak97}. This would imply that the Sobolev space $W^{1,p}(\Omega; X)$, defined in terms of Definition~\ref{fd} and the Kuratowski embedding $\kappa \colon X \to \ell^\infty$, is the same as the Sobolev spaces introduced in \cite{Reshetnyak97, KS:93, Sha:00, Haj:96, HK:00, Che:99, AGS:14}. 
Unfortunately, it has recently been observed in \cite{CJPA2020} that there is a subtle measurability-related mistake in the proof of the equality and indeed $W^{1,p}(\Omega ;Y^*)$ equals $R^{1,p}(\Omega ; Y^*)$ only if $Y^*$ has the Radon-Nikod\'ym property. For the sake of defining metric space valued Sobolev maps this is potentially problematic because many spaces of geometric interest, such as the Heisenberg group or even~$\mathbb{S}^1$ (equipped with the angular metric), do not isometrically embed into a Banach space which has the Radon-Nikod\'ym property, see \cite{CK:06} and \cite[Remark~4.2]{Cre:20}. 
Our first main result shows that indeed $W^{1,p}(\Omega ;X)$, as defined in \cite{HT:08} in terms of Definition~\ref{fd} and the Kuratowski embedding, is always trivial, and hence $W^{1,p}(\Omega;X)$ is not equal to $R^{1,p}(\Omega ;X)$ for any geometrically interesting space $X$.

\begin{thm}
\label{thm:triv}
Let $\Omega \subset \mathbb{R}^n$ be a bounded domain, $X$ be a complete separable metric space and $p\in [1,\infty)$. Then a function $f\colon \Omega \to X$ lies in $ W^{1,p}(\Omega;X)$ if and only if it is almost everywhere constant.
\end{thm}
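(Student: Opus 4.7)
The plan is to reduce to the one-dimensional case via Fubini-type slicing and then to exploit absolute continuity of one-dimensional Sobolev representatives together with the density of the Kuratowski sequence. Suppose $f\in W^{1,p}(\Omega;X)$ is not a.e.\ constant. A standard induction on dimension shows that if $f$ were a.e.\ constant along almost every line parallel to each coordinate axis, it would be a.e.\ constant on $\Omega$; so for some direction $j$ the restrictions of $f$ to a positive-measure family of parallel lines fail to be a.e.\ constant. Fubini-type slicing ensures that such a restriction still satisfies Definition~\ref{fd} in dimension one (the slice of an essentially separably valued function is essentially separably valued). Hence it suffices to treat $\Omega=(0,1)$.

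Write $g:=\kappa\circ f\in W^{1,p}((0,1);\ell^\infty)$. The theory of Banach-space-valued Sobolev functions in one dimension yields an absolutely continuous representative $\tilde g$ of $g$ satisfying $\tilde g(t)-\tilde g(0)=\int_0^t g'(s)\,ds$ as a Bochner integral in $\ell^\infty$. Since $X$ is complete, $\kappa(X)\subset \ell^\infty$ is closed, so $\tilde g$ takes values in $\kappa(X)$ and corresponds to a continuous representative $\tilde f$ of $f$. If $f$ is not a.e.\ constant, then $\tilde f$ is not constant.

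Writing $g_k(t)=d(\tilde f(t),x_k)-d(x_0,x_k)$ for the $k$-th coordinate of $g$, each scalar derivative $g_k'(t)$ exists almost everywhere, and the map $t\mapsto (g_k'(t))_k\in\ell^\infty$ must be essentially separably valued. My goal is to contradict this by producing an uncountable set of times at which these sequences are pairwise $\ell^\infty$-separated by a fixed positive constant. The prototype is $X=[0,1]$ with $\tilde f$ the identity, where $g_k'(t)=\operatorname{sgn}(t-q_k)$ and density of $\{q_k\}$ forces $\|g'(t_1)-g'(t_2)\|_\infty=2$ whenever $t_1\neq t_2$, exhibiting an uncountable discrete subset of $\ell^\infty$ in the essential range of $g'$. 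For general $\tilde f$, given $t_1\neq t_2$ with $\tilde f(t_1)\neq\tilde f(t_2)$, I plan to select a dense point $x_k$ sufficiently close to $\tilde f(t_1)$ so that the one-sided derivative behaviour of $s\mapsto d(\tilde f(s),x_k)$ at $t_1$ differs uniformly from its behaviour at $t_2$, yielding a lower bound on $|g_k'(t_1)-g_k'(t_2)|$ that depends only on $d(\tilde f(t_1),\tilde f(t_2))$.

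The main obstacle is this quantitative spread claim in an abstract metric space. Although $s\mapsto d(\tilde f(s),x_k)$ is always compatible with the metric derivative $|\tilde f'|$, extracting a uniform separation of derivative values from the density of $\{x_k\}$ requires a careful analysis at Lebesgue points of $|\tilde f'|$ and at points where $\tilde f$ does not pass through $x_k$, exploiting the non-smooth behaviour of distance functions near their minima. I expect the bulk of the technical work to lie here, and this is presumably the step where the proof in~\cite{HT:08} implicitly breaks down.
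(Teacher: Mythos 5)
Your high-level strategy is the same as the paper's: reduce to a non-constant absolutely continuous curve with values in $\kappa(X)$ and show that the coordinate functions of the Kuratowski embedding force the derivative to fail essential separable valuedness. However, the step you yourself flag as ``the main obstacle'' is precisely the missing content, and the specific plan you sketch for it does not work. You propose to pick $x_k$ close to $\tilde f(t_1)$ and deduce a uniform lower bound on $|g_k'(t_1)-g_k'(t_2)|$, i.e.\ to control the \emph{pointwise} derivative $g_k'(t_1)$. This cannot be done: if $d(\tilde f(t_1),x_k)=\varepsilon>0$, the proximity of $x_k$ only influences the difference quotients of $s\mapsto d(\tilde f(s),x_k)$ at scales $h\gtrsim\varepsilon$, since the available estimate is $\bigl(d(\tilde f(t_1+h),\tilde f(t_1))-2\varepsilon\bigr)/h$, which degenerates as $h\to 0$ with $\varepsilon$ fixed. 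Hence $g_k'(t_1)$ itself is unconstrained (for a unit-speed curve it can be any value in $[-1,1]$, e.g.\ $0$ at a point where the curve passes closest to $x_k$), the pairwise separation $|g_k'(t_1)-g_k'(t_2)|\geq c\,d(\tilde f(t_1),\tilde f(t_2))$ need not hold, and the uncountable $\ell^\infty$-discrete set of derivative values you aim for need not exist. The prototype $X=[0,1]$ is misleading here because its coordinate derivatives are genuine jump functions $\operatorname{sgn}(t-q_k)$.

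The paper's Lemma~\ref{lem:nonexcurve} circumvents exactly this difficulty by never evaluating the coordinate derivative at a single point: after passing to the unit-speed parametrization $\bar\gamma$, for a.e.\ $t_0$ (where $\|\bar\gamma'(t_0)\|_\infty=1$) and for \emph{each} small $h>0$ it chooses $x_i$ with $\|\kappa(x_i)-\gamma(t_0)\|_\infty\leq h/4$ -- so the accuracy $\varepsilon$ is tied to the scale $h$ -- and integrates $\bar\gamma_i'$ over $(t_0,t_0+h)$ and $(t_0-h,t_0)$. This produces subsets of measure at least $h/8$ on either side of $t_0$ on which $\bar\gamma_i'\geq 1/8$ resp.\ $\leq -1/8$, which shows that $\bar\gamma'$ is approximately continuous at no such $t_0$; the failure of essential separable valuedness then follows from Federer's characterization of measurability via approximate continuity (Theorem~\ref{thm:approx}) together with the weak* Pettis lemma (Lemma~\ref{lem:pettis}), with no pairwise separation of derivative values required. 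If you replace your pointwise separation claim by this averaged, two-sided density argument and the approximate-continuity criterion, your reduction to dimension one (which is sound, modulo the routine verification that a.e.\ slice of a $W^{1,p}$ function is a one-dimensional $W^{1,p}$ function whose derivative is the slice of the essentially separably valued weak derivative) completes the proof along the lines of the paper.
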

There is a number of articles subsequent to \cite{HT:08} that have worked with this definition of metric space valued Sobolev maps, see \cite{Haj:09, WZ:09, Haj:11, BMT:13, BHW:14, HST:14, DHLT:14}. In particular, important results such as \cite[Theorem~1.2]{WZ:09}, \cite[Theorem~1.4]{Haj:11} or \cite[Theorem~1.9]{HST:14} are formally not correct as stated. 
To fix this technical problem, instead of Definition~\ref{fd}, we suggest the following one.
\begin{defn}
\label{fd2}
Let $V^*$ be a dual Banach space and $p\in [1,\infty)$. The space $ L^{p}_*(\Omega; V^*)$ consists of those functions $f\colon \Omega\to V^*$ that are weak* measurable and for which the function $x \mapsto ||f(x)||$ lies in $L^p(\Omega)$.\\
A function $f$ lies in the Sobolev space $W^{1,p}_*(\Omega ; V^*)$ if $f\in L^p(\Omega ;V^*)$ and for every $j=1, \dots , n$ there is a function $f_j\in L^p_*(\Omega;V^*)$ such that
\begin{equation}
\label{eq:def-w*}
\int_\Omega\frac{\partial \varphi}{\partial x_j}(x)\cdot f(x)\ \textrm{d}x 
= -\int_\Omega\varphi(x)\cdot f_j(x) \ \textrm{d}x \quad \text{ for every } \varphi \in C^\infty_0(\Omega)
\end{equation}
in the sense of Gelfand integrals.
\end{defn}

The main difference between $W^{1,p}_*$ and $W^{1,p}$ is that for $W^{1,p}_*$ the weak derivatives do not need to be measurable and instead one only assumes weak* measurability. In particular, the functions $f_j$ in Definition~\ref{fd2} do not need to be Bochner integrable. Our second main result shows that $W^{1,p}_*$ indeed gives the right Sobolev space.

\begin{thm}\label{theorem:W=R}
Let $\Omega\subset \mathbb{R}^n$ be open, $Y$ be a separable Banach space, and $p\in [1,\infty)$. Then 
\[
W^{1,p}_*(\Omega;Y^*)=R^{1,p}(\Omega;Y^*).
\]
\end{thm}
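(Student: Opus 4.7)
The plan is to prove the two inclusions separately, using the classical characterization of $R^{1,p}(\Omega;Y^*)$ in terms of a common $L^p$ upper gradient for all scalar compositions. Since $Y$ is separable, fix once and for all a countable subset $D$ of the closed unit ball of $Y$ that is dense there, so that $\|z\|_{Y^*}=\sup_{y\in D}\langle z,y\rangle$ for every $z\in Y^*$.

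For the inclusion $W^{1,p}_*(\Omega;Y^*)\subseteq R^{1,p}(\Omega;Y^*)$, suppose $f\in W^{1,p}_*$ with weak* derivatives $f_1,\dots,f_n\in L^p_*(\Omega;Y^*)$. Pairing the Gelfand identity \eqref{eq:def-w*} with a fixed $y\in Y$ shows that the scalar function $\langle f(\cdot),y\rangle$ lies in $W^{1,p}(\Omega)$ with weak partial derivative $\partial_j\langle f,y\rangle=\langle f_j,y\rangle$, and in particular $|\partial_j\langle f,y\rangle|\le\|y\|\,\|f_j\|$ almost everywhere. For arbitrary $z\in Y^*$ we then have $\|f(x)-z\|=\sup_{y\in D}\langle f(x)-z,y\rangle$, a countable supremum of $W^{1,p}$-functions whose partials are uniformly dominated by $\|f_j\|\in L^p(\Omega)$. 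A standard lemma on countable suprema of Sobolev functions yields $\|f(\cdot)-z\|\in W^{1,p}(\Omega)$ with the same bound, so $g=\bigl(\sum_j\|f_j\|^2\bigr)^{1/2}\in L^p(\Omega)$ serves as a common upper gradient for all distance functions $\|f-z\|$, which is the $R^{1,p}$ condition.

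For the reverse inclusion, let $f\in R^{1,p}(\Omega;Y^*)$ with upper gradient $g\in L^p(\Omega)$. Since $z\mapsto\langle z,y\rangle$ is $1$-Lipschitz whenever $\|y\|\le 1$, the scalar $\langle f,y\rangle$ belongs to $W^{1,p}(\Omega)$ with $|\partial_j\langle f,y\rangle|\le\|y\|\,g$ almost everywhere, for every $y\in Y$. Choose a countable dense sequence $\{y_k\}\subset Y$ and discard a single null set $N\subset\Omega$ outside of which the derivatives $\partial_j\langle f,y_k\rangle(x)$ are all defined, obey the pointwise bound $\|y_k\|g(x)$, and depend $\mathbb{Q}$-linearly on $y_k$ (by linearity of the distributional derivative). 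Extending by continuity produces for each $x\notin N$ a functional $f_j(x)\in Y^*$ with $\|f_j(x)\|\le g(x)$ and $\langle f_j(x),y\rangle=\partial_j\langle f,y\rangle(x)$ for every $y\in Y$; setting $f_j\equiv 0$ on $N$ gives a weak* measurable map in $L^p_*(\Omega;Y^*)$. The identity \eqref{eq:def-w*} then holds after pairing with any $y_k$ by ordinary scalar integration by parts, and extends to arbitrary $y\in Y$ via the uniform bounds $|\langle f,y-y_k\rangle|\le\|y-y_k\|\,\|f\|$ and $|\langle f_j,y-y_k\rangle|\le\|y-y_k\|g$, which allow passage to the limit $y_k\to y$ under the integrals.

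The main obstacle is the pointwise construction of $f_j$ in the second inclusion: one must assemble a possibly non-Bochner-measurable $Y^*$-valued map from an uncountable family of scalar weak derivatives, each determined only up to a null set depending on $y$. Controlling this by restricting to a countable dense $\{y_k\}$, exploiting the $\mathbb{Q}$-linearity of distributional differentiation, and using the pointwise norm bound $\|y\|g(x)$ to extend uniquely by continuity to all of $Y$ is the essential step; the remaining parts of the proof reduce to routine density arguments and scalar integration by parts.
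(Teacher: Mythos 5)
Your inclusion $R^{1,p}(\Omega;Y^*)\subseteq W^{1,p}_*(\Omega;Y^*)$ is essentially correct, and it takes a genuinely different route from the paper: you assemble the weak* derivative $f_j(x)$ pointwise by fixing consistent representatives of the countably many scalar derivatives $\partial_j\langle f,y_k\rangle$, using $\mathbb{Q}$-linearity and the bound $\|y\|\,g(x)$ outside a single null set, and extending by continuity to a functional of norm at most $g(x)$. The paper instead produces, for each $j$, a representative of $f$ that is absolutely continuous on almost every line parallel to the $x_j$-axis (Lemma~\ref{lem:lemma2.13}) and defines $f_j$ as the weak* derivative of that curve via Lemma~\ref{lemma:2.8}, then invokes Lemma~\ref{lem:chain-abs} for the integration by parts. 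The two constructions produce the same object, and yours is arguably more self-contained for this direction.

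The gap is in the other inclusion. What you actually establish there is that $\langle f,y\rangle\in W^{1,p}(\Omega)$ with $|\partial_j\langle f,y\rangle|\le\|y\|\,\|f_j\|$ for every $y\in Y$ --- that is, $f\in R^{1,p}_*(\Omega;Y^*)$ in the paper's notation --- together with control of the distance functions $\|f(\cdot)-z\|$. But $R^{1,p}(\Omega;Y^*)$ as defined in Definition~\ref{definition:RS-space} requires $\langle v^{**},f\rangle\in W^{1,p}(\Omega)$ with $|\nabla\langle v^{**},f\rangle|\le\|v^{**}\|\,g$ for \emph{every} $v^{**}\in(Y^*)^*=Y^{**}$, not only for functionals coming from the predual $Y$. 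Controlling the distance functions is not ``the $R^{1,p}$ condition'' in the sense of this paper: it is Reshetnyak's original metric-space formulation, whose equivalence with Definition~\ref{definition:RS-space} is itself a nontrivial result available only for bounded domains (note that $\|f(\cdot)-z\|$ cannot lie in $L^p(\Omega)$ when $\Omega$ has infinite measure), whereas the theorem allows arbitrary open $\Omega$. The missing step --- upgrading from test functionals in $Y$ to test functionals in $Y^{**}$ --- is precisely the content of Proposition~\ref{prop:R*=R}, which the paper identifies as the more original part of the whole argument; it is proved by first constructing a representative of $f$ that is absolutely continuous as a $Y^*$-valued curve, with metric speed at most $g$, on almost every axis-parallel segment (Lemma~\ref{lem:lemma2.13}), and then post-composing with an arbitrary $v^{**}$, which is Lipschitz and therefore preserves absolute continuity and the derivative bound. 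Your countable-supremum argument is a sensible first move in that direction, but as written it stops short of the stated conclusion.
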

Thus, for a bounded $\Omega$ and a separable metric space $X$, one can define $W^{1,p}_*(\Omega; X)$ as the set of those functions $f \colon \Omega \to X$ such that $\kappa \circ f\in W^{1,p}_*(\Omega ; \ell^\infty)$ and deduce that
\[
W^{1,p}_*(\Omega;X)=R^{1,p}(\Omega;X).
\]
We believe that essentially all results in the articles \cite{HT:08, Haj:09, WZ:09, Haj:11, BMT:13, BHW:14, HST:14, DHLT:14} become true if one respectively replaces $W^{1,p}(\Omega; X)$ by $W^{1,p}_*(\Omega; X)$ and that the proofs apply up to straightforward adjustments.

An advantage of our definition of $ W^{1,p}_*(\Omega; X)$ over the other equivalent definitions of metric space valued Sobolev maps is that it gives a characterization in terms of actual linear differentials and not just upper gradients, metric differential seminorms or alike. It might seem that such linear differentials are somewhat artificial in the context of general metric target spaces. However, indeed there are some nice arguments and constructions that heavily rely on this sort of objects, see e.g.\ \cite{Haj:11, DHLT:14, AK:00, Iva:08}.
\subsection{Organization}
First in Section~\ref{sec:2} we will go through some auxiliary results and definitions concerning the calculus of functions with values in Banach spaces. More precisely, in Sections~\ref{sec:2.1} and \ref{sec:2.2} we discuss different notions concerning measurability and integrals of Banach space valued functions. Then in Section~\ref{sec:abs-curves} we study some basic properties of the weak* derivatives of absolutely continuous curves in dual-to-separable Banach spaces. Section~\ref{sec:3} is dedicated to Sobolev maps with values in Banach spaces and more particularly the proof of Theorem~\ref{theorem:W=R}. 
To this end we will consider an auxiliary space $R^{1,p}_*(\Omega ; Y^*)$ whose definition interpolates between the definitions of  $R^{1,p}(\Omega ; Y^*)$ and $W^{1,p}_*(\Omega ; Y^*)$.  
In Sections~\ref{sec:3.1} and~\ref{sec:3.2} we then respectively prove the equalities $R^{1,p}_*=R^{1,p}$ and $R^{1,p}_*=W^{1,p}_*$. The more original part here is the proof of the equality $R^{1,p}_*=R^{1,p}$ since  
the proof of $R^{1,p}_*=W^{1,p}_*$ is very much along the lines of the intended proof of $W^{1,p}=R^{1,p}$ in \cite{HT:08}. In the final Section~\ref{sec:4} we discuss Sobolev functions with values in a metric space~$X$. First in Section~\ref{sec:4.1} we shortly introduce the Sobolev spaces $W^{1,p}_*(\Omega ;X)$. Then in Section~\ref{sec:4.2} we focus on $W^{1,p}(\Omega ;X)$ and prove Theorem~\ref{thm:triv}. The proof here is a slightly involved argument that exploits the strange analytic properties of the Kuratowski embedding.

\subsection{Acknowledgements and remarks} We want to thank Alexander Lytchak and Elefterios Soultanis for helpful comments. Also we are grateful to Piotr Haj{\l}asz who has made the contact among the two of us and with the authors of \cite{CJPA2020}. In this context we also learned that the authors of \cite{CJPA2020} are currently working on related questions concerning Sobolev functions with values in Banach spaces.
\section{Calculus of Banach space valued functions}
\label{sec:2}
During this section let $E \subset \mathbb{R}^n$ be Lebesgue measurable and $V$ be a Banach space.
\subsection{Measurability of Banach space valued functions}
\label{sec:2.1}
We call a function $f\colon E \to V$ \emph{measurable} if it is measurable with respect to the Borel $\sigma$-algebra on~$V$ and the $\sigma$-algebra of Lebesgue measurable subsets on $E$. It is called \emph{weakly measurable} if $x\mapsto \langle v^*, f(x)\rangle$ defines a measurable function $E\to \mathbb{R}$ for every $v^* \in V^*$ and \emph{essentially separably valued} if there is a null set $N\subset E$ such that $f(E \setminus N)$ is separable. Trivially measurability implies weak measurability. If additionally one assumes that $f$ is essentially separably valued then, by Pettis' measurability theorem, also the converse implication holds, see e.g.\ \cite[Section 3.1]{HKST2015}. In general however, weakly measurable functions do not need to be measurable, see \cite[Remark 3.1.3]{HKST2015}.

A function $f\colon E \to V$ is called \emph{approximately continuous} at $x \in E$ if for every $\varepsilon >0$ one has
\[
\lim_{r\downarrow 0}\frac{\mathcal{L}^n\left(\left\{y \in B(x,r)\cap E \ \colon \  ||f(y)-f(x)||\geq \varepsilon \right\}\right)}{\mathcal{L}^n\left(B(x,r)\right)}=0.
\]
The following characterization of measurability will be important in the proof of Theorem~\ref{thm:triv}.
\begin{thm}[\cite{Fed:65}, Theorem 2.9.13]
\label{thm:approx}
Let $f\colon E \to V$ be essentially separably valued. Then $f$ is measurable if and only if $f$ is approximately continuous at a.e.\ $x\in E$.
\end{thm}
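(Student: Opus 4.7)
The plan is to reduce to the case where $V$ is separable by discarding the null set $N$ outside of which $f$ is separably valued, and then to handle the two implications in turn. Throughout, I would work with a separable closed subspace $V_0\subset V$ such that $f(E\setminus N)\subset V_0$.

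For the forward implication, suppose $f$ is measurable. Since $V_0$ is separable, the Lusin property for Banach space valued measurable functions applies (it follows from Pettis' theorem and the classical scalar Lusin theorem applied to the functions $\langle v^*,f\rangle$): for every $\varepsilon>0$ there is a closed set $K_\varepsilon\subset E$ with $\mathcal{L}^n(E\setminus K_\varepsilon)<\varepsilon$ such that the restriction $f|_{K_\varepsilon}$ is continuous. At any Lebesgue density point $x_0\in K_\varepsilon$, continuity of the restriction shows that for each $\eta>0$ the set $\{y\in K_\varepsilon\cap B(x_0,r):||f(y)-f(x_0)||\geq \eta\}$ is empty for $r$ small, while $\mathcal{L}^n(B(x_0,r)\setminus K_\varepsilon)/\mathcal{L}^n(B(x_0,r))\to 0$. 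Adding the two estimates gives approximate continuity of $f$ at $x_0$. Since almost every point of $K_\varepsilon$ is a density point, $f$ is approximately continuous a.e.\ on $K_\varepsilon$; taking $\varepsilon=1/k$ and unioning yields approximate continuity almost everywhere on $E$.

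For the reverse implication, suppose $f$ is approximately continuous almost everywhere. Fix a countable dense sequence $\{v_k\}\subset V_0$ and set $g_k(x):=||f(x)-v_k||$. The reverse triangle inequality $|g_k(y)-g_k(x_0)|\leq ||f(y)-f(x_0)||$ transfers approximate continuity of $f$ at $x_0$ directly to approximate continuity of each $g_k$ at $x_0$. The scalar analogue of the theorem, which is classical and proved by a density point argument, then gives that every $g_k$ is Lebesgue measurable. Hence $f^{-1}(B(v_k,r))=\{g_k<r\}$ is measurable for each $k$ and each $r\in\mathbb{Q}_+$. Since these open balls generate the Borel $\sigma$-algebra on the separable space $V_0$, I conclude that $f$ is measurable.

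The main obstacle is the scalar fact invoked in the reverse direction: a real-valued function which is approximately continuous almost everywhere is Lebesgue measurable. The subtle point is that the level set $\{g_k=\alpha\}$ may a priori fail to be measurable; I would circumvent this by showing that for each $\alpha$ the sublevel set $\{g_k<\alpha\}$ agrees modulo a null set with its own set of Lebesgue density points (which is measurable by the Lebesgue density theorem), where the inclusion uses approximate continuity at points of $\{g_k<\alpha\}$ and the reverse inclusion follows from approximate continuity at points of $\{g_k\geq\alpha\}$ by comparing with $\{g_k\leq\alpha-1/m\}$ and letting $m\to\infty$. An alternative route, which bypasses the scalar step, is to argue directly that approximate continuity almost everywhere already yields the Lusin-type decomposition into closed sets of continuity used in the forward direction, so that measurability of $f$ follows by approximating it pointwise almost everywhere by continuous functions.
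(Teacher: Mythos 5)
The paper offers no proof of this statement to compare against: it is quoted directly from Federer's Theorem~2.9.13. Judged on its own, your outline is the standard argument and is essentially sound, but two steps are not correct as literally written.

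First, in the forward direction, the Lusin property for $f$ does not follow from Pettis' theorem plus the scalar Lusin theorem applied to the functions $\langle v^*,f\rangle$: continuity of countably many scalar compositions on a common closed set does not yield norm continuity of $f$ there, since a countable supremum of continuous functions need not be continuous. What you need is Lusin's theorem for measurable maps into the separable metric space $V_0$, which is true but proved differently (cover $V_0$ by countably many small balls, approximate the measurable preimages by closed sets, and pass to a uniform limit of locally constant maps); alternatively, and more cheaply, note that almost every $x$ with $f(x)\in V_0$ is simultaneously a density point of every one of the countably many measurable sets $f^{-1}(B(v_k,q))$, $q\in\mathbb{Q}_+$, that contain it, which gives approximate continuity at $x$ directly and bypasses Lusin altogether.

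Second, and more seriously, the phrase ``letting $m\to\infty$'' in your treatment of the scalar fact conceals a genuine pitfall: knowing that each $A_m:=\{g_k\le\alpha-1/m\}$ has outer density $0$ at a point $x$ does \emph{not} imply that $A:=\bigcup_m A_m=\{g_k<\alpha\}$ fails to have density $1$ at $x$ --- the sets $\{t:|t|\ge 1/m\}$ all have density $0$ at the origin while their union has density $1$ there. The limit must instead be taken at the level of exceptional null sets. Choose increasing measurable hulls $\tilde A_m\supset A_m$, so that outer density of $A_m$ equals density of $\tilde A_m$ at every point. Every approximate-continuity point of $\{g_k\ge\alpha\}$ is then a dispersion point of each $\tilde A_m$, while almost every point of $\tilde A_m$ is a density point of $\tilde A_m$; hence each $\tilde A_m\cap\{g_k\ge\alpha\}$ is null, and so is $\tilde A\cap\{g_k\ge\alpha\}=\tilde A\setminus A$ where $\tilde A:=\bigcup_m\tilde A_m$ is a measurable hull of $A$ (using continuity from below of the regular outer measure $\mathcal{L}^{n*}$ on the increasing sequence $A_m$). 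This exhibits $A$ as a measurable set minus a null set, which is what you want; note that with this bookkeeping the ``forward inclusion'' of your density-point reformulation is not even needed. With these two repairs, and the same modulo-null-set argument for preimages of general open subsets of $V$ that the paper carries out in the proof of Lemma~\ref{lem:pettis}, your proof goes through.
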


A function $f\colon E \to V^*$ is called \emph{weak* measurable} if $x \mapsto \langle v , f (x)\rangle$ defines a measurable function $E \to \mathbb{R}$ for every $v \in V$. We will need the following slight strengthening of Pettis' theorem.
\begin{lem}
\label{lem:pettis}
Let $f\colon E \to V^*$ be essentially separably valued. Then $f$ is measurable if and only if $f$ is weak* measurable.
\end{lem}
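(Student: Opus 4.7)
The direct implication is trivial: if $f$ is Borel measurable into $V^*$ then for every $v \in V$ the composition with the norm-continuous functional $\phi \mapsto \langle v, \phi \rangle$ is Lebesgue measurable, hence $f$ is weak* measurable. For the converse, suppose $f$ is weak* measurable and choose a null set $N \subset E$ such that $f(E \setminus N)$ is separable. Let $S$ denote the closed linear span of $f(E \setminus N)$ in $V^*$, which is a separable closed subspace containing the essential range of $f$. The dual norm satisfies $\|\phi\|_{V^*} = \sup_{v \in B_V}|\langle v, \phi \rangle|$, a supremum over the possibly non-separable unit ball $B_V$; the strategy is to replace this by a countable supremum when restricted to $S$, using only the separability of $S$ and not of $V$.

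To this end I would pick a countable dense sequence $\{\phi_n\}_{n\in \mathbb{N}}$ in $S$ and, for every pair $(n,k) \in \mathbb{N}^2$, use the definition of the dual norm to choose $v_{n,k} \in B_V$ with $|\langle v_{n,k}, \phi_n \rangle| > \|\phi_n\|-1/k$. Enumerating the resulting countable family as $\{v_m\}_{m \in \mathbb{N}} \subset B_V$, a routine density argument—exploiting that each $|\langle v_m, \cdot \rangle|$ is $1$-Lipschitz on $V^*$—shows that $\{v_m\}$ is norming for all of $S$, i.e.\ $\|\phi\| = \sup_m |\langle v_m, \phi \rangle|$ for every $\phi \in S$.

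Given this countable norming family, for any fixed $\phi_0 \in \{\phi_n\}$ and $x \in E \setminus N$ one has $f(x) - \phi_0 \in S$, so
\[
\|f(x) - \phi_0\| = \sup_m |\langle v_m, f(x) \rangle - \langle v_m, \phi_0 \rangle|
\]
is a countable supremum of Lebesgue measurable functions and hence measurable; therefore $f^{-1}(B(\phi_n, r))$ is Lebesgue measurable for every $n \in \mathbb{N}$ and every rational $r>0$. Since these balls form a countable base of the topology of $S$, they generate its Borel $\sigma$-algebra, and so $f$ is Borel measurable as a map into $S$ on $E\setminus N$. As $S$ is closed in $V^*$, Borel subsets of $S$ remain Borel in $V^*$, and the null set $N$ contributes only null preimages, which are measurable by completeness of the Lebesgue measure. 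The main obstacle is the potential non-separability of $V$, which would foil any attempt to use a countable dense sequence of $B_V$ directly; the resolution is the observation that we only need to norm the separable subspace $S \subset V^*$, for which a countable subset of $V$ suffices by the elementary approximation above, without any appeal to Hahn--Banach beyond the definition of the dual norm.
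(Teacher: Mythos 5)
Your proof is correct and follows essentially the same strategy as the paper: both arguments extract a countable norming family in $B_V$ from the separability of the essential range (the paper norms the difference set $f(E\setminus N)-f(E\setminus N)$, you norm its closed linear span $S$, which is a cosmetic difference), deduce that $x\mapsto \|f(x)-\phi_n\|$ is measurable as a countable supremum, and conclude via a countable cover of open sets by balls centered in a dense set, using completeness of the Lebesgue $\sigma$-algebra to absorb the null set $N$.
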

\begin{proof}
Clearly measurable functions are weak* measurable. So we only prove the other implication. By assumption there is a null set $N\subset E$ such that $f(E \setminus N)$ is separable. Let $D=\{v_1^*,v_2^*,\dots\}$ be a countable dense subset in $f(E\setminus N)$. Then $D-D$ is a countable dense subset of the difference set $f(E \setminus N)-f(E \setminus N)$.  By definition of the dual norm for every $i,j\in \mathbb{N}$ there is a sequence $(v^{ij}_k)_{k\in \mathbb{N}}$ of unit vectors in $V$ such that \[\langle v^{ij}_k, v_i^*-v_j^*\rangle \to ||v_i^*-v_j^*|| \quad \textnormal{as } k\to \infty.\]
Thus, it follows from the weak* measurability of $f$ that for every $i\in \mathbb{N}$ the function
\[
x \mapsto ||f(x)-v_i^*|| =\sup_{j,k \in \mathbb{N}} \langle v^{ij}_k, v_i^*-f(x)\rangle
\]
is measurable. In particular, $f^{-1}(B)$ is measurable for every open ball $B\subset V^*$ with center in $D$. 

Let $U\subset V^*$ be open. Then there is a countable collection $(B_i)_{i\in \mathbb{N}}$ of balls in $V^*$ with centers in $D$ such that 
\begin{equation}
f(E\setminus N)\cap U =f(E\setminus N) \cap \left( \bigcup_{i\in \mathbb{N}} B_i \right)
\end{equation}
and hence
\begin{equation}
\label{eq:U-meas}
f^{-1}(U)\cup N=\left(\bigcup_{i\in \mathbb{N}} f^{-1}(B_i)\right)\cup N.
\end{equation}
Since $\bigcup_{i\in \mathbb{N}} f^{-1}(B_i)$ is Lebesgue measurable and $N$ is a null set, \eqref{eq:U-meas} implies that $f^{-1}(U)$ is Lebesgue measurable. 
The open subsets generate the Borel $\sigma$-algebra of~$V$, so we conclude that $f$ is measurable.
\end{proof}
\subsection{Integrals of Banach space valued functions}
\label{sec:2.2}
A function $f\colon E \to V$ is called \emph{simple} if there are measurable subsets $E_1, \dots , E_k$ of $E$ and vectors $v_1, \dots , v_k$ in $V$ such that $f=\sum^k_{i=1} \chi_{E_i}\cdot v_i$. If $f$ is simple and all the subsets $E_i$ are of finite $\mathcal{L}^n$-measure, then $f$ is called \emph{integrable} and one defines the \emph{integral} of $f$ as
\[
\int_E f(x)\ \textrm{d} x:= \sum^k_{i=1} \mathcal{L}^n(E_i)\cdot v_i.
\]
A function $f\colon E \to V$ is called \emph{Bochner integrable} if there are integrable simple functions $(f_k\colon E \to V)_{k\in \mathbb{N}}$ such that 
\[
\lim_{k \to \infty}\int_E ||f_k(x)-f(x)|| \ \textrm{d} x = 0
\]
The  \emph{Bochner integral} of such Bochner integrable function $f$ is defined as
\[
\int_{E} f(x) \ \textrm{d} x:=\lim_{k\to \infty} \int_{E} f_k(x) \ \textrm{d}x.
\]
Indeed, a function $f$ is Bochner integrable if and only it lies in the space $L^1(E ;V)$ introduced in Definition~\ref{fd}, see \cite[Proposition 3.2.7]{HKST2015}. Furthermore, if $f$ is Bochner integrable and $v^*\in V^*$ then $x \mapsto \langle v^*, f(x) \rangle$ is integrable and 
\begin{equation}
\label{eq:lin}
\left\langle v^*, \int_E f(x) \ \textrm{d} x\right\rangle =\int_E \langle v^*, f(x) \rangle \ \textrm{d} x.
\end{equation}

The Bochner integral is arguably the most popular notion concerning integrals of Banach space valued functions. However, its limitation to essentially separably valued measurable functions is somewhat to rigid for our purposes. Instead we will often work with the so-called Gelfand integral which is a weak* variant of the more well-known Pettis integral that is defined for weakly measurable functions. It goes back to \cite{Gel:36} and can be defined in terms of the following lemma. See also \cite[p.~53]{DU:1977}.
\begin{lem}\label{lemma:Gelfand-integral}
Let $f\colon E\to V^*$ be a weak* measurable function such that for every $v\in V$ the function $x \mapsto \langle v,f(x)\rangle$ lies in $L^1(E)$.
Then there is a unique vector $v^*_f\in V^*$ such that
\[
\langle v, v^*_f \rangle = \int_E\langle v, f(x) \rangle\ \textrm{d}x \quad \text{ for every } v\in V.
\]
\end{lem}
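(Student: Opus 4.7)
Uniqueness is immediate: any $v^*_f \in V^*$ is determined by its action on elements of $V$, so two candidates satisfying the displayed identity must coincide. For existence, the plan is to define the functional $T\colon V \to \mathbb{R}$ by
\[
T(v) := \int_E \langle v, f(x)\rangle \ \textrm{d}x,
\]
which makes sense by the standing hypothesis that $\langle v, f(\cdot)\rangle \in L^1(E)$ for every $v \in V$. Clearly $T$ is linear, so it suffices to show that $T$ is bounded; then $v^*_f := T$ is the required element of $V^*$.

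To establish boundedness, I would factor $T$ through $L^1(E)$. Consider the linear map $S\colon V\to L^1(E)$ given by $S(v)(x):=\langle v, f(x)\rangle$, so that $T=I\circ S$, where $I\colon L^1(E)\to \mathbb{R}$ is the continuous integration functional of norm~$1$. Since $I$ is continuous, it suffices to prove continuity of $S$, and the natural tool here is the Closed Graph Theorem, applied to the Banach spaces $V$ and $L^1(E)$.

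The main step is thus to verify that $S$ has closed graph. Suppose $v_n \to v$ in $V$ and $S(v_n) \to g$ in $L^1(E)$. Passing to a subsequence we may assume that $S(v_n)(x)=\langle v_n, f(x)\rangle$ converges to $g(x)$ for almost every $x \in E$. On the other hand, for every fixed $x\in E$, the functional $f(x)\in V^*$ is continuous, so $\langle v_n, f(x)\rangle \to \langle v, f(x)\rangle$ as $n\to \infty$. Hence $g(x)=\langle v, f(x)\rangle = S(v)(x)$ for a.e.~$x$, proving that $(v,g)$ lies in the graph of $S$. I do not expect genuine obstacles beyond this closed-graph verification; the rest is bookkeeping.

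Once continuity of $S$ is in hand, one concludes $|T(v)| \leq \|I\|\cdot \|S\|\cdot\|v\| = \|S\|\cdot \|v\|$ for every $v\in V$, so $T$ defines an element $v^*_f$ of $V^*$ satisfying $\langle v, v^*_f\rangle = T(v)$ for every $v$, which is the desired identity.
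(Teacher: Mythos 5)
Your proposal is correct and follows essentially the same route as the paper: both prove continuity of the linear map $v\mapsto \langle v, f(\cdot)\rangle$ from $V$ to $L^1(E)$ via the closed graph theorem (using a.e.\ convergence of a subsequence and the pointwise continuity of each $f(x)\in V^*$), and then compose with integration to bound the functional. Your explicit factorization $T=I\circ S$ is only a cosmetic repackaging of the paper's final estimate.
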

\begin{proof}
First we claim that the operator $T\colon V\to L^1(E)$ defined by $Tv = \langle v, f\rangle$ is continuous. 
To this end let $(v_k, Tv_k)_{k\in \mathbb{N}}$ belong to the graph of $T$. 
Suppose that $v_k\to v$ in $V$ and $Tv_k\to g$ in $L^1(E)$.
Then there is a subsequence $(Tv_{k_m})_{m\in \mathbb{N}}$ which converges a.e.\ on $E$ to $g$. In particular
\[
g(x) = \lim_{m\to\infty}Tv_{k_m}(x) = \lim_{m\to\infty}\langle v_{k_m}, f(x)\rangle
= \langle v, f(x)\rangle = (Tv)(x)
\]
for a.e.\  $x\in\Omega$. Hence the linear operator $T$ has a closed graph and the closed graph theorem implies that $T$ is continuous. 

Thus for every $v\in V$ one has
\[
\left| \int_E\langle v, f(x) \rangle\ \textrm{d}x \right| \leq \|Tv\| \leq \|T\|\cdot\|v\|.
\]
This shows that the functional $v_f^*$ given by $v_f^*(v):= \int_E\langle v, f(x) \rangle\ \textrm{d}x$ is continuous and hence completes the proof.
\end{proof}
Functions $f\colon E \to V^*$ that meet the assumptions of Lemma~\ref{lemma:Gelfand-integral} are called \emph{Gelfand integrable} and for such $f$ the arising functional $v^*_f$  is called the \emph{Gelfand integral} of $f$. By \eqref{eq:lin} and Lemma~\ref{lemma:Gelfand-integral}, if $f\colon E \to V^*$ is Bochner integrable then $f$ is Gelfand integrable and $\int_E f(x) \ \textrm{d} x=v^*_f$. Hence we will not create ambiguity when we also denote Gelfand integrals by $\int_E f(x) \ \textrm{d} x$ instead of $v^*_f$. Note that if $\Omega \subset \mathbb{R}^n$ is open and $f\in L^{p}_*(\Omega; V^*)$, then $\varphi\cdot f$ is Gelfand integrable for every $\varphi \in C^\infty_0(\Omega)$ and hence the Gelfand integrals that appear in Definition~\ref{fd2} are well-defined.
\subsection{Absolutely continuous curves in Banach spaces}
\label{sec:abs-curves}
Recall that a function $f \colon [a,b] \to \mathbb{R}$ is called \emph{absolutely continuous} when it satisfies the fundamental theorem of calculus. That is when $f$ is differentiable almost everywhere, the derivative $f'$ is Lebesgue integrable and
\[
f(t)-f(a)=\int_a^t f'(s) \ \textrm{d}s
\]
for every $t\in [a,b]$. 
The \emph{length} of a continuous curve $\gamma \colon [a,b] \to V$ is defined as
\[
l(\gamma):=\sup \sum^n_{i=1} ||\gamma(t_i)-\gamma(t_{i-1})||
\]
where the supremum ranges over all $n\in \mathbb{N}$ and all $a=t_0\leq t_1\leq \dots \leq t_n=b$. The curve $\gamma$ is called \emph{rectifiable} if $l(\gamma)$ is finite. For a rectifiable curve $\gamma$ we define its \emph{length function} $s_\gamma\colon [a,b]\to [0, l(\gamma)]$  by
\[
s_\gamma(t)=l( \gamma|_{[a,t]}).
\]
The length function gives rise to a unique curve $\bar{\gamma}\colon [0, l(\gamma)]\to V$ such that \begin{equation}
    \bar{\gamma} \circ s_\gamma =\gamma.
\end{equation} 
The curve $\bar{\gamma}$ is called the \emph{unit-speed parametrization} of $\gamma$ because one has for every $t\in [0,l(\gamma)]$ that
\[
l(\bar{\gamma}|_{[a,t]})=t-a.
\]

A curve $\gamma\colon [a,b] \to V$ is called \emph{absolutely continuous} if it is rectifiable and the length function $s_\gamma$ is absolutely continuous.  
Absolutely continuous curves in a Banach space $V$ do not need to be differentiable almost everywhere unless $V$ has the Radon-Nikod\'ym property. Nevertheless, if $V$ is dual to a separable Banach space then absolutely continuous curves in $V$ are weak* differentiable almost everywhere in the sense of the following lemma.
\begin{lem}[{\cite[Lemma 2.8]{HT:08}}]\label{lemma:2.8}
Let $Y$ be a separable Banach space. Then for every absolutely continuous curve $\gamma \colon[a,b]\to Y^*$ 
there is a weak* measurable function $\gamma'\colon [a,b] \to Y^*$ such that for almost every $t\in [a,b]$ and every $y\in Y$ one has
\begin{equation}\label{eq:w*-derivative}
\left\langle y, \frac{\gamma(t+h)-\gamma(t)}{h} \right\rangle \to \left\langle y, \gamma'(t)\right\rangle \quad 
\text{ as } h\to 0.
\end{equation}
\end{lem}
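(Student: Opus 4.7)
The plan is to exploit separability of $Y$ to reduce weak* differentiability to the classical one-dimensional theorem on absolutely continuous real functions. For each fixed $y\in Y$, the estimate $\|\gamma(t)-\gamma(s)\|\leq |s_\gamma(t)-s_\gamma(s)|$ combined with the absolute continuity of $s_\gamma$ implies that the scalar curve $t\mapsto\langle y,\gamma(t)\rangle$ is absolutely continuous in the classical sense, and hence differentiable at almost every $t\in[a,b]$. The issue is that the exceptional null set a priori depends on $y$, and this is precisely the obstacle that separability of $Y$ will overcome.

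To remove the $y$-dependence, I would fix a countable dense subset $D=\{y_k\}_{k\in\mathbb{N}}\subset Y$ and let $N\subset [a,b]$ be the union of the (countably many) null sets on which $t\mapsto\langle y_k,\gamma(t)\rangle$ fails to be differentiable, together with the null set on which $s_\gamma$ fails to be differentiable. For any $t\in [a,b]\setminus N$ and any $y\in Y$ for which $\tfrac{d}{dt}\langle y,\gamma(t)\rangle$ exists one has the a priori bound
\[\left|\tfrac{d}{dt}\langle y,\gamma(t)\rangle\right|\leq \|y\|\cdot s_\gamma'(t),\]
obtained by passing to the limit in the difference quotient. This makes $y_k\mapsto \tfrac{d}{dt}\langle y_k,\gamma(t)\rangle$ well-defined on $D$, $\mathbb{Q}$-linear on its $\mathbb{Q}$-span (by linearity of the derivative), and Lipschitz with constant $s_\gamma'(t)$, so it extends uniquely to an element $\gamma'(t)\in Y^*$ with $\|\gamma'(t)\|\leq s_\gamma'(t)$; set $\gamma'(t):=0$ for $t\in N$.

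To verify \eqref{eq:w*-derivative} at $t\notin N$ I would use a $3\varepsilon$-type argument: given $y\in Y$ and $\varepsilon>0$, choose $y_k\in D$ with $\|y-y_k\|<\varepsilon$, so that
\[\left|\left\langle y,\tfrac{\gamma(t+h)-\gamma(t)}{h}\right\rangle-\langle y,\gamma'(t)\rangle\right|\leq 2\varepsilon\cdot\tfrac{|s_\gamma(t+h)-s_\gamma(t)|}{|h|}+\left|\left\langle y_k,\tfrac{\gamma(t+h)-\gamma(t)}{h}\right\rangle-\langle y_k,\gamma'(t)\rangle\right|,\]
and let $h\to 0$; the first term tends to $2\varepsilon\cdot s_\gamma'(t)$ while the second tends to $0$ by construction, yielding an arbitrarily small limit. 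Finally, the weak* measurability of $\gamma'$ is automatic, since for every $y\in Y$ the function $t\mapsto\langle y,\gamma'(t)\rangle$ is the pointwise a.e.\ limit of the measurable difference quotients $t\mapsto\langle y,(\gamma(t+1/n)-\gamma(t))/(1/n)\rangle$. The only real subtlety is the simultaneous choice of a single null set $N$ valid for all $y\in Y$, which the countable dense set $D$ plus the $3\varepsilon$-argument resolves.
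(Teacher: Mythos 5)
Your argument is correct and is essentially the standard one: the paper does not prove this lemma itself but imports it from \cite[Lemma~2.8]{HT:08}, where the proof is exactly what you describe --- a.e.\ differentiability of the absolutely continuous scalar curves $t\mapsto\langle y_k,\gamma(t)\rangle$ for a countable dense family, a single exceptional null set by countability, extension of the densely defined bounded $\mathbb{Q}$-linear functional using the bound $\bigl|\tfrac{d}{dt}\langle y,\gamma(t)\rangle\bigr|\leq\|y\|\, s_\gamma'(t)$, and a $3\varepsilon$ approximation controlled by $s_\gamma'$. The only detail worth making explicit is that at $t\notin N$ the derivative automatically exists for every element of the $\mathbb{Q}$-span of $D$ (by linearity of differentiation of the scalar functions), so the Lipschitz $\mathbb{Q}$-linear functional is genuinely defined on a dense $\mathbb{Q}$-subspace and no further enlargement of $N$ is required.
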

If $t\in [a,b]$ is such that \eqref{eq:w*-derivative} holds for every $y\in Y$ then $\gamma$ is called \emph{weak* differentiable} at $t$ and $\gamma'(t)$ is called the \emph{weak* derivative} of $\gamma$ at $t$. By the next two lemmas weak* derivatives have desirable analytical and metric properties.
\begin{lem}
\label{lem:chain-abs}
Let $Y$ be a separable Banach space and $\gamma \colon [a,b] \to Y^*$ be absolutely continuous. Then for every $\varphi \in C^\infty_0 ((a,b))$ one has
\begin{equation}
\label{eq:w*chain}
   \int^b_a \frac{\partial \varphi}{\partial t} (t) \cdot \gamma(t) \ \textnormal{\textrm{d}}t=- \int^b_a \varphi(t) \cdot \gamma'(t) \ \textnormal{\textrm{d}} t
\end{equation}
in the sense of Gelfand integrals.
\end{lem}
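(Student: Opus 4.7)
The plan is to test the desired identity against an arbitrary $y \in Y$ and reduce to the scalar integration-by-parts formula for real-valued absolutely continuous functions.

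First I would verify that both sides of \eqref{eq:w*chain} are well-defined Gelfand integrals. For the left-hand side, $\varphi'$ has compact support in $(a,b)$ and $\gamma$ is bounded (being continuous on a compact interval), so $\varphi' \cdot \gamma$ is weak* measurable (continuity into $(Y^*, \|\cdot\|)$ implies weak* measurability) and the scalar pairings $t \mapsto \varphi'(t)\langle y, \gamma(t)\rangle$ are bounded and measurable, hence in $L^1$. For the right-hand side, the key observation is that for each $y\in Y$ the real-valued function $t \mapsto \langle y, \gamma(t)\rangle$ is absolutely continuous, because
\[
|\langle y, \gamma(t) - \gamma(s)\rangle| \leq \|y\| \cdot \|\gamma(t)-\gamma(s)\| \leq \|y\| \cdot |s_\gamma(t)-s_\gamma(s)|,
\]
and $s_\gamma$ is absolutely continuous by assumption. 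In particular $\frac{d}{dt}\langle y, \gamma(t)\rangle$ exists almost everywhere and is integrable. Combined with Lemma~\ref{lemma:2.8}, this identifies
\[
\frac{d}{dt}\langle y, \gamma(t)\rangle = \langle y, \gamma'(t)\rangle \quad \text{for a.e.\ } t\in [a,b],
\]
which shows that $\varphi \cdot \gamma'$ satisfies the hypotheses of Lemma~\ref{lemma:Gelfand-integral}.

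Having these integrability facts in hand, the identity \eqref{eq:w*chain} is proved by unraveling the definition of the Gelfand integral. By Lemma~\ref{lemma:Gelfand-integral}, it suffices to show that for every $y \in Y$,
\[
\int_a^b \varphi'(t) \langle y, \gamma(t)\rangle \, \textrm{d}t = -\int_a^b \varphi(t) \langle y, \gamma'(t)\rangle \, \textrm{d}t.
\]
Since $t \mapsto \langle y, \gamma(t)\rangle$ is absolutely continuous and its classical derivative equals $\langle y, \gamma'(t)\rangle$ almost everywhere by the preceding paragraph, the classical integration-by-parts formula for absolutely continuous real-valued functions (together with $\varphi(a)=\varphi(b)=0$) yields exactly this equality.

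I do not expect a major obstacle here; the only subtle point is checking that weak* differentiability from Lemma~\ref{lemma:2.8} actually produces the classical derivative of the scalar function $t \mapsto \langle y, \gamma(t)\rangle$, which is where the absolute continuity of $s_\gamma$ (and hence of each scalar pairing) enters decisively to upgrade pointwise differentiability to the fundamental theorem of calculus.
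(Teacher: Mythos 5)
Your proposal is correct and follows essentially the same route as the paper's proof: pair with an arbitrary $y\in Y$, observe that $t\mapsto\langle y,\gamma(t)\rangle$ is absolutely continuous with a.e.\ derivative $\langle y,\gamma'(t)\rangle$ by Lemma~\ref{lemma:2.8}, and apply classical integration by parts. The extra care you take in verifying that both Gelfand integrals are well-defined is a welcome addition but does not change the argument.
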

Lemma 2.11 in \cite{HT:08} claims that the equality \eqref{eq:w*chain} holds in the sense of Bochner integrals. In general however, as the subsequent example shows, the weak* derivative of an absolutely continuous curve in $Y^*$ does not need to be essentially separably valued and hence the Bochner integral $\int^b_a \varphi(t) \cdot \gamma'(t) \ \textnormal{\textrm{d}} t$ may not be defined.  
\begin{ex}
Consider the curve $\gamma \colon [0,1] \to L^\infty([0,1])$ given by $(\gamma(t))(s)=|t-s|$. Then $\gamma$ is an isometric embedding and hence in particular absolutely continuous. Further $\gamma$ is weak* differentiable at every $t\in [0,1]$ with weak* derivative \[\gamma'(t)=-\chi_{(0,t)}+\chi_{(t,1)}.\] Thus \[||\gamma'(s)-\gamma'(t)||_\infty=2\] for every $t\neq s$ and hence $\gamma'\colon [0,1]\to L^\infty([0,1])$ cannot be essentially separably valued.
\end{ex}
\begin{proof}[Proof of Lemma~\ref{lem:chain-abs}]
Let $\varphi \in C^\infty_0((a,b))$ and $y\in Y$. For $t\in [a,b]$ we will denote $\gamma_{y}(t):=\langle y , \gamma(t) \rangle$. Then $\gamma_{y} \colon [a,b] \to \mathbb{R}$ is absolutely continuous and, by the classical product rule,
\begin{equation}
\label{eq:chain-abs}
    \int^b_a \frac{\partial \varphi}{\partial t} (t) \cdot \gamma_{y}(t) \ \textnormal{\textrm{d}}t=- \int^b_a \varphi(t) \cdot \gamma_{y}'(t) \ \textnormal{\textrm{d}} t.
\end{equation}
 Furthermore by \eqref{eq:w*-derivative} for almost every $t\in [a,b]$ one has \begin{equation}
 \label{eq:deriv-eval}
\langle y, \gamma'(t) \rangle =\gamma_{y}'(t).
 \end{equation}
By \eqref{eq:chain-abs} and \eqref{eq:deriv-eval}, and because $y\in Y$ was arbitrary, we conclude equality \eqref{eq:w*chain}.
\end{proof}
\begin{lem}
\label{deriv:charac}
Let $Y$ be a separable Banach space. If $\gamma \colon[a,b]\to Y^*$ is absolutely continuous then
\begin{equation}\label{eq:norm-w*-derivative}
||\gamma '(t)||=\lim_{h\to 0} \frac{||\gamma(t+h)-\gamma(t)||}{|h|}=s_\gamma'(t)
\end{equation}
for almost every $t\in [a,b]$.
\end{lem}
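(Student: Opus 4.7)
The plan is to sandwich the middle quantity $\lim_{h\to 0}||\gamma(t+h)-\gamma(t)||/|h|$ between $||\gamma'(t)||$ and $s_\gamma'(t)$ from both sides, with a Gelfand-integral version of the fundamental theorem of calculus playing the central role.

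First I would record two a.e.\ upper bounds that come for free. Fix a countable dense sequence $\{y_k\}$ in the closed unit ball of $Y$. Since $||v^*||=\sup_k|\langle y_k,v^*\rangle|$ for every $v^*\in Y^*$, passing to the limit in \eqref{eq:w*-derivative} yields the weak* lower semicontinuity bound
\[
||\gamma'(t)||\leq\liminf_{h\to 0}\frac{||\gamma(t+h)-\gamma(t)||}{|h|}
\]
at a.e.\ $t$; the same formula shows that $t\mapsto||\gamma'(t)||$ is measurable. Combining the chord bound $||\gamma(t+h)-\gamma(t)||\leq|s_\gamma(t+h)-s_\gamma(t)|$ with absolute continuity of $s_\gamma$ gives $\limsup_{h\to 0}||\gamma(t+h)-\gamma(t)||/|h|\leq s_\gamma'(t)$ a.e., and chaining these yields $||\gamma'(t)||\leq s_\gamma'(t)$ a.e., so $||\gamma'||\in L^1([a,b])$.

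Next I would establish the Gelfand integral identity
\[
\gamma(t)-\gamma(s)=\int_s^t\gamma'(u)\,\textrm{d}u \qquad\text{for every } [s,t]\subset[a,b].
\]
Integrability of $\gamma'$ on $[s,t]$ in the sense of Lemma~\ref{lemma:Gelfand-integral} follows from the pointwise bound $|\langle y,\gamma'(u)\rangle|=|\gamma_y'(u)|\leq||y||\,s_\gamma'(u)$, where $\gamma_y(u):=\langle y,\gamma(u)\rangle$ is absolutely continuous via $|\gamma_y(t)-\gamma_y(s)|\leq||y||\,|s_\gamma(t)-s_\gamma(s)|$. Pairing both sides of the displayed identity with an arbitrary $y\in Y$ reduces the claim to the scalar fundamental theorem $\gamma_y(t)-\gamma_y(s)=\int_s^t\gamma_y'(u)\,\textrm{d}u$, and uniqueness in Lemma~\ref{lemma:Gelfand-integral} upgrades this to the vector identity. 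The ensuing duality bound $||\int_s^t\gamma'(u)\,\textrm{d}u||\leq\int_s^t||\gamma'(u)||\,\textrm{d}u$ will serve a double purpose.

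On the one hand, Lebesgue differentiation of $||\gamma'||\in L^1([a,b])$ gives $\limsup_{h\to 0}||\gamma(t+h)-\gamma(t)||/|h|\leq||\gamma'(t)||$ at a.e.\ $t$, which combined with step one yields the first equality of \eqref{eq:norm-w*-derivative}. On the other hand, summing the duality bound over an arbitrary partition of $[a,b]$ and taking the supremum gives $l(\gamma)\leq\int_a^b||\gamma'(u)||\,\textrm{d}u$; since $\int_a^b s_\gamma'(u)\,\textrm{d}u=s_\gamma(b)=l(\gamma)$ and $||\gamma'||\leq s_\gamma'$ a.e.\ from step one, both integrals coincide, forcing $||\gamma'(t)||=s_\gamma'(t)$ a.e.\ and yielding the second equality. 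The main obstacle is the Gelfand integral representation: one must verify the hypotheses of Lemma~\ref{lemma:Gelfand-integral} through the scalar bound $|\gamma_y'|\leq||y||\,s_\gamma'$ and then invoke uniqueness to promote a family of scalar equalities, one per $y\in Y$, into a single $Y^*$-valued identity; everything afterward is routine.
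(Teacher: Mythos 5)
Your proposal is correct and follows essentially the same route as the paper: weak* lower semicontinuity for $||\gamma'(t)||\leq\liminf_h||\gamma(t+h)-\gamma(t)||/|h|$, the length function for $\limsup_h\leq s_\gamma'(t)$, and the duality bound $||\gamma(\bar t)-\gamma(t)||\leq\int_t^{\bar t}||\gamma'(r)||\,\textrm{d}r$ (obtained by pairing with unit vectors and the scalar fundamental theorem of calculus) to force $\int_a^b s_\gamma'=l(\gamma)\leq\int_a^b||\gamma'||$ and close the squeeze. Your extra Lebesgue-differentiation step giving $\limsup_h\leq||\gamma'(t)||$ directly is sound but not needed, since the a.e.\ identity $||\gamma'||=s_\gamma'$ already pins down the middle limit.
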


\begin{proof}
Assume $t\in [a,b]$ is such that $s_\gamma$ is differentiable at $t$ and that $\gamma$ is weak* differentiable at $t$. Then for every $y\in Y$ with $||y||\leq 1$ one has
\[
\langle y , \gamma'(t)\rangle = \lim_{h\to 0} \left\langle y, \frac{\gamma(t+h)-\gamma(t)}{h} \right\rangle \leq \liminf_{h \to 0} \frac{||\gamma(t+h)-\gamma(t)||}{|h|}
\]
and hence
\begin{equation}
\label{eq:md1}
||\gamma'(t)||\leq \liminf_{h \to 0} \frac{||\gamma(t+h)-\gamma(t)||}{|h|}.
\end{equation}
Furthermore
\begin{equation}
\label{eq:md2}
\limsup_{h\to 0} \frac{||\gamma(t+h)-\gamma(t)||}{|h|}\leq \limsup_{h\to 0} \frac{l(\gamma|_{[t,t+h]})}{|h|}=s_\gamma'(t).
\end{equation}
To prove the reverse inequalities, let $t,\bar{t}\in [a,b]$ with $t<\bar{t}$. Then for every $y\in Y$ with $||y||\leq 1$ one has
\begin{equation}
    \langle y , \gamma(\bar{t})-\gamma(t) \rangle =\int_t^{\bar{t}} \langle y, \gamma'(r) \rangle \ \textrm{d}r\leq \int_t^{\bar{t}} ||\gamma'(r)|| \ \textrm{d} r
\end{equation}
and thus
\begin{equation}
\label{eq:md3}
    ||\gamma(\bar{t})-\gamma(t)||\leq \int_t^{\bar{t}} ||\gamma'(r)|| \ \textrm{d} r.
\end{equation}
Since $t$ and $\bar{t}$ were arbitrary, we conclude that
\begin{equation}
\label{eq:md4}
\int_a^{b} s_\gamma'(r) \ \textrm{d} r=l(\gamma)\leq \int_a^{b} ||\gamma'(r)|| \ \textrm{d} r.
\end{equation}
\eqref{eq:md1}, \eqref{eq:md2} and \eqref{eq:md4} together imply the claim.
\end{proof}

\section{Banach space valued Sobolev maps}
\label{sec:3}
Throughout this section let $\Omega \subset \mathbb{R}^n$ be open, $V$ be a Banach space, $Y$ be a separable Banach space and $p\in [1,\infty)$.

\subsection{The Reshetnyak-Sobolev space}
\label{sec:3.1}

The following definition of first-order So\-bo\-lev functions with values in Banach spaces goes back to \cite{Reshetnyak97}.
\begin{defn}
\label{definition:RS-space}
The \textit{Reshetnyak--Sobolev space} $R^{1,p}(\Omega; V)$ consists of those functions $f\in L^p(\Omega;V)$ such that:
\begin{enumerate}[(A)]
\item for every $v^*\in V^*$ the function $x \mapsto \langle v^*, f(x) \rangle$ lies in the classical Sobolev space $W^{1,p}(\Omega):=W^{1,p}(\Omega ; \mathbb{R})$;
\item \label{item:RS2} there is a function $g\in L^p(\Omega)$ such that for every $v^*\in V^*$ one has
\[ |\nabla\langle v^*, f(x) \rangle| \leq ||v^*||\cdot g(x) \quad \textnormal{for a.e.\ }x\in \Omega.\]
\end{enumerate}
A function $g$ as in (\ref{item:RS2}) will be called a \emph{weak upper gradient} of $f$. A seminorm is defined on $R^{1,p}(\Omega ;V)$ by 
\[
||f||_{R^{1,p}}:=\left(\int_\Omega ||f(x)||^p \ \textrm{d}x\right)^{1/p} + \inf_{g} \ ||g||_{L^p}
\]
where $g$ ranges over all weak upper gradients of $f$.
\end{defn}
Indeed, Definition~\ref{definition:RS-space} is a variation on the original definition by Reshetnyak. The reason for the present choice of definition is that, in contrast to the definition in \cite{Reshetnyak97}, it also allows for unbounded domains $\Omega$. This extension is possible because we limit ourselves here to maps with values in Banach spaces while Reshetnyak considers general metric target spaces. In any case the two definitions are equivalent if $\Omega$ is a bounded domain, see \cite[Lemma 2.16]{HT:08} and \cite[Theorem 5.1]{Reshetnyak97}.

To prove that $R^{1,p}(\Omega ;Y^*)$ equals $W^{1,p}_*(\Omega ; Y^*)$, we will work with the following auxiliary definition that interpolates between the two spaces.
\begin{defn}
The space $R^{1,p}_*(\Omega; V^*)$ consists of those functions $f\in L^p(\Omega;V^*)$ such that:
\begin{enumerate}[(A*)]
\item for every $v\in V$ the function $x \mapsto \langle v, f(x) \rangle$ lies in $W^{1,p}(\Omega)$;
\item \label{item:RS2*} there is a function $g\in L^p(\Omega)$ such that for every $v\in V$ one has
\[ |\nabla\langle v, f(x) \rangle| \leq ||v||\cdot g(x) \quad \textnormal{for a.e.\ }x\in \Omega.\]
\end{enumerate}
A function $g$ as in (B*) 
will be called a \emph{weak* upper gradient} of $f$. A seminorm is defined on $R^{1,p}_*(\Omega ;V^*)$ by 
\[
||f||_{R^{1,p}_*}:=\left(\int_\Omega ||f(x)||^p \ \textrm{d}x\right)^{1/p} + \inf_{g} \ ||g||_{L^p}
\]
where $g$ ranges over all weak* upper gradients of $f$. 
\end{defn}

We will denote by $\textnormal{ACL}(\Omega)$ the collection of all functions $f\colon \Omega \to \mathbb{R}$ for which the restriction of $f$ to almost every compact line segment, that is contained in $\Omega$ and parallel to some coordinate axis, is absolutely continuous. Recall that every real valued Sobolev function in $f\in W^{1,p}(\Omega)$ has a representative $\widetilde{f}\in \textnormal{ACL}(\Omega)$. The following lemma shows that similar is true for functions in $R^{1,p}_*(\Omega; Y^*)$.
\begin{lem}\label{lem:lemma2.13}
Let $V$ be a Banach space and $f\in R^{1,p}_{*}(\Omega; V^*)$.
Then for every $j\in \{1, \dots ,n\}$ the function $f$ has a representative $\widetilde{f}^j$ that is absolutely continuous on almost every compact line segment which is contained in $\Omega$ and parallel to the $x_j$-axis. 
Moreover, for every weak* upper gradient $g$ of $f$ one has
\begin{equation}\label{eq:lemma2.13est}
\lim_{h\to 0}\frac{\|\tilde f^j(x+h e_j)- \tilde f^j(x)\|}{|h|} \leq g(x) \quad \textnormal{for a.e.\ } x\in \Omega.
\end{equation} 
\end{lem}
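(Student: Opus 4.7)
The plan is to reduce to a one-dimensional statement along lines parallel to $e_j$ and, on each such line, to synthesize a $V^*$-valued representative out of the absolutely continuous representatives of the real-valued functionals $\langle v,f\rangle$. Fix $j$ and a weak* upper gradient $g$ of $f$, and parametrize each line $\ell$ parallel to $e_j$ as $\ell=\{x_\ell+t e_j:t\in I_\ell\}$. Fubini applied to $\|f\|^p,g^p\in L^1(\Omega)$ yields, for almost every such $\ell$, that $t\mapsto\|f(x_\ell+t e_j)\|$ and $t\mapsto g(x_\ell+t e_j)$ lie in $L^p(I_\ell)$. For each fixed $v\in V$, the ACL characterization of $W^{1,p}(\Omega)$ produces a representative of $\langle v,f\rangle$ whose restriction to almost every line is absolutely continuous with classical derivative bounded by $\|v\|\cdot g$; picking a countable dense $D\subset V$ (available in the separable setting of the main application) and discarding a single null set of lines, I may assume this holds simultaneously for every $v\in D$ on each ``good'' line $\ell$.

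On such a good line $\ell$ I would set, for every $t\in I_\ell$ and every $v\in D$,
\[
\tilde f^j(x_\ell+t e_j)(v) := u^\ell_v(t),
\]
where $u^\ell_v\in C(I_\ell)$ is the continuous representative of $t\mapsto \langle v,f(x_\ell+t e_j)\rangle\in W^{1,p}(I_\ell)$. Linearity of $v\mapsto u^\ell_v(t)$ on $D$ is automatic, since $u^\ell_{v+w}$ and $u^\ell_v+u^\ell_w$ are continuous representatives of the same element of $W^{1,p}(I_\ell)$ and hence coincide on all of $I_\ell$. For boundedness I would fix a Lebesgue point $t_0$ of $t\mapsto\|f(x_\ell+te_j)\|$ (which depends only on $\|f\|$, not on $v$): continuity of $u^\ell_v$ at $t_0$ together with $|\langle v,f\rangle|\leq\|v\|\cdot\|f\|$ gives $|u^\ell_v(t_0)|\leq \|v\|\cdot\|f(x_\ell+t_0 e_j)\|$, and the fundamental theorem of calculus for $u^\ell_v$, combined with the pointwise bound $|(u^\ell_v)'(t)|\leq \|v\|\cdot g(x_\ell+t e_j)$ for a.e.\ $t$, propagates this estimate to
\[
|u^\ell_v(t)-u^\ell_v(t')|\leq \|v\|\cdot\Bigl|\int_{t'}^{t} g(x_\ell+s e_j)\,ds\Bigr| \quad\text{for all } t,t'\in I_\ell.
\]
Hence $v\mapsto u^\ell_v(t)$ extends uniquely from $D$ to a bounded linear functional $\tilde f^j(x_\ell+te_j)\in V^*$, and the same inequality becomes the Lipschitz-type bound $\|\tilde f^j(x_\ell+te_j)-\tilde f^j(x_\ell+t'e_j)\|_{V^*}\leq \bigl|\int_{t'}^{t} g(x_\ell+s e_j)\,ds\bigr|$. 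Absolute continuity of $\tilde f^j$ along $\ell$ follows at once, and Lebesgue differentiation applied to $s\mapsto g(x_\ell+s e_j)$ then yields~\eqref{eq:lemma2.13est}. That $\tilde f^j$ is a representative of $f$ is a consequence of $\langle v,\tilde f^j\rangle=u_v=\langle v,f\rangle$ almost everywhere, which is valid for $v\in D$ and then, by density, for every $v\in V$.

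The hard part will be verifying the boundedness of the prescription $v\mapsto u^\ell_v(t)$, since this is what actually produces an element of $V^*$ out of the family of real-valued continuous representatives. Boundedness forces the careful choice of a base point $t_0$ that is a Lebesgue point of $\|f\|$ on $\ell$ independently of $v$, and it is precisely the fact that the weak* upper gradient $g$ is a single $L^p$ function uniformly dominating all the partial derivatives $\partial_j\langle v,f\rangle$ that allows the local estimate at $t_0$ to be transported along the entire line.
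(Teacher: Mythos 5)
Your construction is a genuinely different route from the paper's in the case it covers, and in that case it is essentially sound: instead of synthesizing $\widetilde{f}^j(x)$ as a functional out of the scalar continuous representatives $u^\ell_v$, the paper works with $f$ itself and shows directly that, on almost every line $l$ and off an $\mathcal{H}^1$-null set, $\|f(l(t))-f(l(s))\|\leq\int_s^t g(l(\tau))\,\textrm{d}\tau$, then takes the unique absolutely continuous extension. Your separable-case argument needs only two small repairs: take $D$ to be a countable $\mathbb{Q}$-linear subspace (so that $v+w\in D$ when $v,w\in D$ and the linearity claim is meaningful), and note that the base point $t_0$ can be any point outside the countable union over $v\in D$ of the null sets where $u^\ell_v\neq\langle v,f\rangle$, so no Lebesgue-point argument is actually required for boundedness. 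With those adjustments the extension to a functional in $V^*$, the Lipschitz-type bound $\|\widetilde{f}^j(x_\ell+te_j)-\widetilde{f}^j(x_\ell+t'e_j)\|\leq\bigl|\int_{t'}^{t}g(x_\ell+se_j)\,\textrm{d}s\bigr|$, and the identification $\widetilde{f}^j=f$ a.e.\ all go through.

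The genuine gap is the separability assumption on $V$, which you flag but do not resolve. The lemma is stated for an arbitrary Banach space $V$, and this generality is actually used: Proposition~\ref{prop:R*=R} invokes the lemma for a general $V$, and your entire construction of $\widetilde{f}^j(x)$ as the continuous extension of $v\mapsto u^\ell_v(t)$ from a countable dense $D\subset V$ has no analogue when $V$ is non-separable. The missing idea is that separability should be extracted not from $V$ but from the image of $f$: by Definition~\ref{fd}, membership in $L^p(\Omega;V^*)$ forces $f$ to be essentially separably valued in $V^*$, so one may choose a countable dense subset $(v_i^*)$ of the difference set $f(\Omega\setminus\Sigma_0)-f(\Omega\setminus\Sigma_0)$ together with countably many norming unit vectors $v_{ik}\in V$ with $\langle v_{ik},v_i^*\rangle\to\|v_i^*\|$. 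This countable family of functionals suffices to compute $\|f(l(t))-f(l(s))\|$ for $t,s$ outside a null set of the line, yielding the Lipschitz estimate for $f$ itself; the representative is then obtained by absolutely continuous extension rather than by duality. Your approach cannot be patched merely by replacing $D$ with such a norming family, because the functional $v\mapsto u^\ell_v(t)$ would then only be defined on the closed span of the $v_{ik}$, which need not determine an element of $V^*$.
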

Lemma~\ref{lem:lemma2.13} generalizes Lemma 2.13 in \cite{HT:08} from $R^{1,p}$ to $R^{1,p}_*$. A posteriori Proposition~\ref{prop:R*=R} will show that this is not a proper generalization.
\begin{proof}
Fix $j\in \{1, \dots , n\}$ and a weak* upper gradient $g$ of $f$. Since $f\in L^p(\Omega; V^*)$, there is a nullset $\Sigma_0\subset \Omega$ such that $f(\Omega\setminus\Sigma_0)$ is separable.
Let $(v^*_i)_{i\in\mathbb N}$ be a dense sequence in the difference set
$f(\Omega\setminus\Sigma_0) - f(\Omega\setminus\Sigma_0)$.
For each $i\in \mathbb{N}$ let 
$(v_{ik})_{k\in\mathbb N}$ be a sequence of unit vectors in $V$ such that
$\|v^*_i\| = \lim_{k\to\infty}\langle v_{ik}, v^*_i \rangle$.
Then for every $i,k\in \mathbb{N}$ one has $\langle v_{ik}, f \rangle\in W^{1,p}(\Omega)$ and 
\begin{equation}
\label{eq:ik-upper}
|\nabla \langle v_{ik}, f(x)\rangle|\leq g(x) \quad \textnormal{for a.e.\ } x\in \Omega.
\end{equation}
Denote by $f_{ik}$ a representative of $\langle v_{ik}, f \rangle$ that is in $\textnormal{ACL}(\Omega)$ and by $\Sigma_{ik}$ the null set on which $f_{ik}$ differs from 
$\langle v_{ik}, f \rangle$.  
Then for almost every line segment
$l:[a,b]\to\Omega$ that is parallel to the $x_j$-axis one has:
\begin{enumerate}[(i)]
\item  \label{item:i} $g$ is integrable over $l$;
\item  \label{item:ii} $\mathcal{H}^1(l\cap\Sigma) = 0$ where 
$\Sigma = \Sigma_0\cup\bigcup_{i,k}\Sigma_{ik}$;
\item \label{item:iii} for every $i,k \in\mathbb N$ and every $a\leq s\leq t\leq b$
\begin{equation}\label{eq:lemma2.13-condition-c}
|f_{ik}(l(t)) - f_{ik}(l(s))| \leq \int_s^tg(l(\tau)) \ \textrm{d}\tau.
\end{equation}
\end{enumerate}    
The Fubini theorem ensures (\ref{item:i}) and (\ref{item:ii}), while (\ref{item:iii}) follows by \eqref{eq:ik-upper}.

Let $l\colon [a,b] \to \Omega$ be a line segment parallel to the $x_j$-axis for which the properties (\ref{item:i}), (\ref{item:ii}) and (\ref{item:iii}) are satisfied. For given $s,t\in l^{-1}(\Omega \setminus \Sigma)$ with $s\leq t$
there is a subsequence $(v^*_{i_m})$ that converges to $f(l(t)) - f(l(s))$ in $V^*$.
Thus, we have
\begin{align}
\label{eq:R*abs}
&\|f(l(t)) - f(l(s))\|\\
= &\lim_{m\to\infty}\|v^*_{i_m}\|
=\lim_{m\to\infty}\lim_{k\to\infty}\langle v_{i_mk}, v^*_{i_m} \rangle\\
= &\limsup_{m\to\infty}\limsup_{k\to\infty}
\big(\langle v_{i_mk}, v^*_{i_m}-(f(l(t)) - f(l(s)))\rangle\\
& \ \ \ \ \ \ \ \ \ \ \ \ \ \ \ \ \ \  \ \ \ \ \ \ \ \ \ \ \ \ + \langle v_{i_mk}, f(l(t)) - f(l(s))\rangle \big)\\
\leq &\limsup_{m\to\infty}\limsup_{k\to\infty}
\big(\|v^*_{i_m}-(f(l(t)) - f(l(s)))\|\\
& \ \ \ \ \ \  \ \ \ \ \ \ \ \ \ \ \ \ \ \ \ \ \ \ \ \ \ \ \ \ + |f_{i_mk}(l(t)) - f_{i_mk}(l(s))| \big) \\
\leq &\int_s^tg(l(\tau)) \ \textrm{d}\tau.
\end{align}
In particular, by properties (\ref{item:i}) and (\ref{item:ii}), and inequality \eqref{eq:R*abs} the restriction of $f$ to $l$ has a unique $\mathcal{H}^1$-representative that is absolutely continuous. The uniqueness implies that these representatives coincide where different line segments overlap. Hence we conclude that $f$ has a representative $\widetilde{f}^j$ that is absolutely continuous on every compact line segment $l$ that satisfies the properties (\ref{item:i}), (\ref{item:ii}) and (\ref{item:iii}). 
Furthermore, by \eqref{eq:R*abs} for every such $l$ one has 
\[
\|\widetilde{f}^j(l(t)) - \widetilde{f}^j(l(s))\| \leq \int_s^tg(l(\tau)) \ \textrm{d}\tau
\]
and hence we conclude that \eqref{eq:lemma2.13est} is satisfied.
\end{proof}
Given that in general $W^{1,p}_*(\Omega ;V^*)$ does not equal $W^{1,p}(\Omega ;V^*)$ the following proposition might be a bit surprising.
\begin{prop}
\label{prop:R*=R}
Let $V$ be a Banach space. Then
\[
R^{1,p}_*(\Omega ; V^*)=R^{1,p}(\Omega ; V^*) 
\]
with $||\cdot ||_{R^{1,p}_*}\leq ||\cdot ||_{R^{1,p}}\leq \sqrt{n} ||\cdot ||_{R^{1,p}_*}$.
\end{prop}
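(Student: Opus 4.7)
The plan is to establish two inclusions separately. The direction $R^{1,p}(\Omega;V^*)\subset R^{1,p}_*(\Omega;V^*)$ together with $\|\cdot\|_{R^{1,p}_*}\le\|\cdot\|_{R^{1,p}}$ should be immediate from the canonical isometric embedding $V\hookrightarrow V^{**}=(V^*)^*$: since every $v\in V$ has norm $\|v\|$ as an element of $V^{**}$, any weak upper gradient of an element $f\in R^{1,p}(\Omega;V^*)$ automatically serves as a weak* upper gradient of the same function, verbatim.

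The nontrivial direction is $R^{1,p}_*\subset R^{1,p}$. Fix $f\in R^{1,p}_*(\Omega;V^*)$ with weak* upper gradient $g$. The crucial input will be Lemma~\ref{lem:lemma2.13}, which upgrades the scalar-level weak* absolute continuity into genuine norm absolute continuity on lines: for each direction $j\in\{1,\dots,n\}$ there is a representative $\widetilde f^j$ of $f$ such that on almost every line segment $\ell$ parallel to the $x_j$-axis,
\[
\|\widetilde f^j(\ell(t))-\widetilde f^j(\ell(s))\|\le \int_s^t g(\ell(\tau))\,\textrm{d}\tau.
\]
Given any $v^{**}\in V^{**}$, pairing this inequality with $v^{**}$ produces a real-valued absolutely continuous function on $\ell$ whose classical derivative is dominated almost everywhere by $\|v^{**}\|\cdot (g\circ\ell)$. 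The one-directional ACL characterization of weak partial derivatives then yields $\partial_j\langle v^{**},f\rangle\in L^p(\Omega)$ with $|\partial_j\langle v^{**},f\rangle(x)|\le\|v^{**}\|\,g(x)$ for a.e.\ $x\in\Omega$. Combining over the $n$ coordinate directions gives $|\nabla\langle v^{**},f\rangle|\le\sqrt{n}\,\|v^{**}\|\,g$, which says exactly that $\sqrt{n}\,g$ is a weak upper gradient of $f$ in the sense of Definition~\ref{definition:RS-space}. Taking the infimum over admissible $g$ produces $\|f\|_{R^{1,p}}\le\sqrt{n}\,\|f\|_{R^{1,p}_*}$.

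The main obstacle is conceptual rather than computational: the $R^{1,p}_*$ hypothesis only provides control on the scalar functions $\langle v,f\rangle$ for $v\in V$, whereas $R^{1,p}$ demands the analogous control for all $v^{**}\in V^{**}$, and $V^{**}$ is in general strictly larger than $V$. Lemma~\ref{lem:lemma2.13} is precisely what bridges this gap, because once we have an estimate at the level of the $V^*$-norm on lines, pairing with any $v^{**}$ is a trivial application of $|\langle v^{**},w^*\rangle|\le\|v^{**}\|\cdot\|w^*\|$. A minor technical nuisance is that the ACL representative $\widetilde f^j$ supplied by Lemma~\ref{lem:lemma2.13} depends on $j$, so the one-directional ACL characterization must be invoked separately in each coordinate direction rather than through a single ACL-in-all-directions representative; this introduces no real difficulty but is what produces the factor $\sqrt{n}$.
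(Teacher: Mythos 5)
Your proposal is correct and follows essentially the same route as the paper's proof: the easy inclusion via the canonical embedding $V\hookrightarrow V^{**}$, and the converse by invoking Lemma~\ref{lem:lemma2.13} to get a norm-absolutely-continuous representative on lines, pairing with an arbitrary $v^{**}\in V^{**}$, and applying the one-directional ACL characterization of weak partial derivatives in each coordinate direction to obtain the factor $\sqrt{n}$. No gaps.
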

\begin{proof}
Trivially $R^{1,p}(\Omega; V^*)\subseteq R^{1,p}_*(\Omega ;V^*)$, and $||f||_{R^{1,p}_*}\leq ||f||_{R^{1,p}}$ for functions $f\in R^{1,p}(\Omega ; V^*)$. 
For the other inclusion let $f\in R^{1,p}_*(\Omega ; V^*)$ and $g$ be a weak* upper gradient of $f$. Since $f\in L^{p}(\Omega; V^*)$, for $v^{**}\in V^{**}$ the function $f_{v^{**}}=\langle v^{**}, f\rangle$ lies in $L^{p}(\Omega)$. For $j\in \{1, \dots, n\}$ let $\widetilde{f}^j$ be a representative of $f$ as in Lemma~\ref{lem:lemma2.13}. Then $\widetilde{f}^j_{v^{**}}:=\langle v^{**},\widetilde{f}^j \rangle $ is a representative of $f_{v^{**}}$ that is absolutely continuous on almost every compact line segment parallel to the $x_j$-axis. Thus $\widetilde{f}^j_{v^{**}}$ is almost everywhere partial differentiable in the $x_j$-direction. By the product rule and the Fubini theorem it follows that
\begin{equation}
   \int_\Omega \frac{\partial \varphi}{\partial x_j}(x) \cdot f_{v^{**}}(x)\ \textrm{d} x= \int_\Omega \frac{\partial \varphi}{\partial x_j}(x) \cdot \widetilde{f}^j_{v^{**}}(x) \ \textrm{d}x= \int_\Omega \varphi(x) \cdot \frac{\partial \widetilde{f}^j_{v^{**}}}{\partial x_j}(x) \ \textrm{d} x 
\end{equation}
for every $\varphi \in C^\infty_0(\Omega)$. In particular $\frac{\partial \widetilde{f}^j_{v^{**}}}{\partial x_j}$ is a $j$-th weak partial derivative of~$f_{v^{**}}$. Furthermore, by Lemma~\ref{lem:lemma2.13} at almost every $x\in \Omega$ one has
\begin{equation}
\left| \frac{\partial \widetilde{f}^j_{v^{**}}}{\partial x_j}(x)\right|\leq \lim_{h \to 0} \frac{\|\tilde f^j(x+h e_j)- \tilde f^j(x)\|}{|h|}\leq g(x)
\end{equation}
and hence
\begin{equation}
    |\nabla f_{v^{**}} (x)|  = \left( \sum_{j=1}^n \left(\frac{\partial \widetilde{f}^j_{v^{**}}}{\partial x_j}(x)\right)^2 \right)^{1/2}\leq \sqrt{n} \cdot g(x).
\end{equation}
Since $v^{**}\in V^{**}$ and the weak* upper gradient $g\in L^{p}(\Omega)$ were arbitrary, we conclude that 
$f\in R^{1,p}(\Omega ;V^{*})$  and \[\|f\|_{R^{1,p}}\leq \sqrt{n}\cdot \|f\|_{R^{1,p}_*}.\]
This completes the proof.
\end{proof}
\subsection{The Sobolev space $W^{1,p}_*$}
\label{sec:3.2}
Let $f\in W^{1,p}_*(\Omega ; V^*)$. We will denote by $\partial_j f$ the function $f_j$ as in Definition~\ref{fd} and call the vector $\nabla f(x) = (\partial_1f(x),\dots,\partial_nf(x))$ the \emph{weak* gradient} of $f$ at $x\in \Omega$. Further we define 
\[
|\nabla f(x)|:=\left(\sum_{i=0}^n ||\partial_i f(x)||^2\right)^{1/2}
\]
and a seminorm on $W^{1,p}_*(\Omega ; V)$ by
\[
||f||_{W^{1,p}_*}:= \left( \int_\Omega ||f(x)||^p \ \textrm{d}x \right)^{1/p}+ \left(\int_\Omega |\nabla f(x)|^p\ \textrm{d}x  \right)^{1/p}
\]
\begin{prop}
\label{prop:W*=R*}
Let $Y$ be a separable Banach space. Then
\[
W^{1,p}_*(\Omega ; Y^*)=R^{1,p}_*(\Omega ; Y^*) 
\]
with $||\cdot ||_{R^{1,p}_*}\leq ||\cdot ||_{W^{1,p}_*}\leq \sqrt{n} ||\cdot ||_{R^{1,p}_*}$.
\end{prop}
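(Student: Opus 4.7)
The plan is to prove the two inclusions separately, with the easy direction giving $\|f\|_{R^{1,p}_*}\le \|f\|_{W^{1,p}_*}$ and the harder direction (via Lemmas~\ref{lemma:2.8} and~\ref{lem:lemma2.13}) giving $\|f\|_{W^{1,p}_*}\le \sqrt n\,\|f\|_{R^{1,p}_*}$.

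For the inclusion $W^{1,p}_*(\Omega;Y^*)\subseteq R^{1,p}_*(\Omega;Y^*)$, I would start with $f\in W^{1,p}_*$ with weak* partial derivatives $f_1,\dots,f_n\in L^p_*(\Omega;Y^*)$, fix $y\in Y$, and pair both sides of the Gelfand integration-by-parts formula \eqref{eq:def-w*} with $y$. By the defining property of the Gelfand integral this produces the classical identity
\[
\int_\Omega\tfrac{\partial\varphi}{\partial x_j}(x)\,\langle y, f(x)\rangle\,\textnormal{d}x=-\int_\Omega\varphi(x)\,\langle y, f_j(x)\rangle\,\textnormal{d}x
\]
for every $\varphi\in C^\infty_0(\Omega)$. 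Since $|\langle y, f_j(x)\rangle|\le \|y\|\cdot\|f_j(x)\|$ and $\|f_j\|\in L^p(\Omega)$, this shows $\langle y,f\rangle\in W^{1,p}(\Omega)$ with $\partial_j\langle y,f\rangle(x)=\langle y,f_j(x)\rangle$ a.e. Consequently $|\nabla\langle y,f\rangle(x)|\le \|y\|\,|\nabla f(x)|$, so $|\nabla f|$ is a weak* upper gradient of $f$, giving the first inclusion and norm bound.

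For the reverse inclusion, given $f\in R^{1,p}_*(\Omega;Y^*)$ with weak* upper gradient $g\in L^p(\Omega)$, I would for each $j$ pass to the representative $\widetilde f^{\,j}$ furnished by Lemma~\ref{lem:lemma2.13}. For a.e.\ compact line segment $l$ parallel to the $x_j$-axis, $\widetilde f^{\,j}\circ l$ is an absolutely continuous curve in $Y^*$, so Lemma~\ref{lemma:2.8} produces its weak* derivative. Define $f_j(x)$ as this weak* derivative at $x$ along the $x_j$-axis; by Fubini this is defined a.e.\ on $\Omega$. Weak* measurability of $f_j$ follows because for every $y\in Y$ the scalar function $\langle y,\widetilde f^{\,j}\rangle$ is AC on a.e.\ $x_j$-line and, by \eqref{eq:w*-derivative}, its classical partial derivative along such lines agrees with $\langle y,f_j\rangle$; hence $\langle y,f_j\rangle$ coincides a.e.\ with the weak partial derivative $\partial_j\langle y,f\rangle\in L^p(\Omega)$, which is measurable. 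Norm control comes from Lemma~\ref{deriv:charac} applied line-wise (or directly from $\|\gamma'(t)\|\le \liminf_{h\to 0}\|\gamma(t+h)-\gamma(t)\|/|h|$) combined with \eqref{eq:lemma2.13est}, yielding $\|f_j(x)\|\le g(x)$ a.e. Measurability of $x\mapsto\|f_j(x)\|$ follows from $\|f_j(x)\|=\sup_k|\langle y_k,f_j(x)\rangle|$ for a countable dense sequence $(y_k)$ in the unit ball of $Y$, so $f_j\in L^p_*(\Omega;Y^*)$.

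It remains to check \eqref{eq:def-w*}: for any $\varphi\in C^\infty_0(\Omega)$ and $y\in Y$, since $\langle y,f\rangle\in W^{1,p}(\Omega)$ has weak partial derivative $\langle y,f_j\rangle$, the scalar integration by parts
\[
\int_\Omega \tfrac{\partial\varphi}{\partial x_j}\,\langle y,f(x)\rangle\,\textnormal{d}x=-\int_\Omega \varphi\,\langle y,f_j(x)\rangle\,\textnormal{d}x
\]
holds, which is precisely the Gelfand-sense identity required for the weak* gradient. Finally, from $\|f_j(x)\|\le g(x)$ we obtain $|\nabla f(x)|=(\sum_j\|f_j(x)\|^2)^{1/2}\le\sqrt n\,g(x)$, so that taking the infimum over weak* upper gradients gives $\|f\|_{W^{1,p}_*}\le\sqrt n\,\|f\|_{R^{1,p}_*}$. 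The main subtlety is the coherent construction of $f_j$ as a single weak* measurable function on $\Omega$ out of the line-by-line weak* derivatives provided by Lemma~\ref{lemma:2.8}, and this is exactly what the ACL representative of Lemma~\ref{lem:lemma2.13} together with Fubini makes possible; the absence of Bochner integrability of $f_j$ is precisely why the Gelfand integral is the right notion.
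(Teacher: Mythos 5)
Your proposal is correct and follows essentially the same route as the paper: the easy inclusion by pairing the Gelfand identity with $y\in Y$, and the reverse inclusion by combining the ACL representative from Lemma~\ref{lem:lemma2.13} with the line-wise weak* derivative from Lemma~\ref{lemma:2.8} and the norm identity of Lemma~\ref{deriv:charac}. The only difference is cosmetic — you justify the weak* measurability of $f_j$ and the Gelfand integration-by-parts via the scalar weak derivatives $\partial_j\langle y,f\rangle$, where the paper invokes Lemma~\ref{lem:chain-abs} together with Fubini, which amounts to the same computation.
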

\begin{proof}
Let $f\in W^{1,p}_*(\Omega ; Y^*)$. Since $f\in L^p(\Omega ; Y^*)$ we know that for $y\in Y$ the function $f_y:=\langle y, f\rangle$ lies in $L^p(\Omega)$. Further, by definition of the Gelfand integral, for $j\in \{1, \dots ,n \}$ the function $\langle y, \partial_j f\rangle$ is a $j$-th weak partial derivative of $f_j$. Hence $f_y \in R^{1,p}_*(\Omega ; Y^*)$ and 
\[
|\nabla f_y (x)|= \left( \sum_{j=1}^n  \langle y, \partial_j f(x)\rangle^2\right)^{1/2}\leq \left( \sum_{j=1}^n ||\partial_j f(x) ||^2\right)^{1/2}=|\nabla f(x)|
\]
for a.e.\ $x\in \Omega$. In particular $f\in R^{1,p}_*(\Omega ; Y^*)$ and $|\nabla f|$ is a weak* upper gradient of $f$. The latter also implies  $||f||_{R^{1,p}_*}\leq ||f||_{W^{1,p}_*}$.

Now, for the other inclusion, let $f\in R^{1,p}_*(\Omega ; Y^*)$ and $g$ be a weak* upper gradient of $f$. For $j\in \{1, \dots ,n\}$ let $\widetilde{f}^j$ be a representative of $f$ as in Lemma~\ref{lem:lemma2.13}. 
Define $f_j(x)$ as the weak* partial derivative~$\frac{\partial \widetilde{f}^j}{\partial x_j}(x)$,
which is defined almost everywhere due to Lemma~\ref{lemma:2.8}. 
Then the function $f_j\colon \Omega \to Y^*$ is weak* measurable. Furthermore, by Lemma~\ref{lem:chain-abs} and the Fubini theorem, for every $\varphi \in C^\infty_0(\Omega)$ one has
\begin{equation}
\label{eq:main-partial}
\int_\Omega \frac{\partial \varphi}{\partial x_j}(x) \cdot f(x) \ \textrm{d}x=\int_\Omega \frac{\partial \varphi}{\partial x_j}(x) \cdot \widetilde{f}^j(x) \ \textrm{d}x=\int_\Omega \varphi(x) \cdot f_j(x) \ \textrm{d} x
\end{equation}
in the sense of Gelfand integrals. Also, by Lemmas~\ref{deriv:charac} and \ref{lem:lemma2.13}, 
\[
\|f_j(x)\|\leq g(x)\quad \textnormal{for a.e.\ }x\in \Omega.
\] 
In particular, since $g\in L^p(\Omega)$, we conclude that $f_j\in L^p_*(\Omega ; Y^*)$ and hence by \eqref{eq:main-partial} that $f\in W^{1,p}(\Omega ; Y^*)$ with
\[
|\nabla f (x) | =\left( \sum_{j=1}^n \|f_j(x)\|^2\right)^{1/2} \leq \sqrt{n}\cdot g(x)
\]
for almost every $x\in \Omega$. Since $g$ was an arbitrary weak* upper gradient of $f$ it also follows that $||f||_{W^{1,p}_*}\leq \sqrt{n} ||f||_{R^{1,p}_*}$.
\end{proof}
Propositions~\ref{prop:R*=R} and~\ref{prop:W*=R*} together imply the following quantitative version of Theorem~\ref{theorem:W=R}.
\begin{thm}
\label{thm:W=Rquant}
Let $Y$ be a separable Banach space. Then
\[
W^{1,p}_*(\Omega ; Y^*)=R^{1,p}(\Omega ; Y^*) 
\]
with $\tfrac{1}{\sqrt{n}}||\cdot ||_{R^{1,p}}\leq ||\cdot ||_{W^{1,p}_*}\leq \sqrt{n} ||\cdot ||_{R^{1,p}}$.
\end{thm}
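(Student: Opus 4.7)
The plan is to obtain Theorem~\ref{thm:W=Rquant} as an immediate concatenation of Propositions~\ref{prop:R*=R} and~\ref{prop:W*=R*}, since together they chain the three spaces $W^{1,p}_*(\Omega;Y^*)$, $R^{1,p}_*(\Omega;Y^*)$ and $R^{1,p}(\Omega;Y^*)$, and $Y^*$ is a dual-to-separable Banach space so both propositions apply. There is essentially no hard step left, only bookkeeping of the seminorm inequalities.

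First I would invoke Proposition~\ref{prop:W*=R*} (applied with $V=Y$), which gives $W^{1,p}_*(\Omega;Y^*)=R^{1,p}_*(\Omega;Y^*)$ as sets together with the two-sided estimate $\|f\|_{R^{1,p}_*}\leq \|f\|_{W^{1,p}_*}\leq \sqrt{n}\,\|f\|_{R^{1,p}_*}$. Then I would apply Proposition~\ref{prop:R*=R} (with $V^*=Y^*$), which gives $R^{1,p}_*(\Omega;Y^*)=R^{1,p}(\Omega;Y^*)$ together with $\|f\|_{R^{1,p}_*}\leq \|f\|_{R^{1,p}}\leq \sqrt{n}\,\|f\|_{R^{1,p}_*}$. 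The set equality $W^{1,p}_*(\Omega;Y^*)=R^{1,p}(\Omega;Y^*)$ is then immediate by transitivity.

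For the quantitative estimates, combining the upper bounds gives
\[
\|f\|_{W^{1,p}_*}\leq \sqrt{n}\,\|f\|_{R^{1,p}_*}\leq \sqrt{n}\,\|f\|_{R^{1,p}},
\]
using first Proposition~\ref{prop:W*=R*} and then the trivial inequality from Proposition~\ref{prop:R*=R}. Conversely, combining the other two estimates,
\[
\|f\|_{R^{1,p}}\leq \sqrt{n}\,\|f\|_{R^{1,p}_*}\leq \sqrt{n}\,\|f\|_{W^{1,p}_*},
\]
which rearranges to $\tfrac{1}{\sqrt{n}}\|f\|_{R^{1,p}}\leq \|f\|_{W^{1,p}_*}$.

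Since neither step requires new input beyond the two preceding propositions, no genuine obstacle arises; the only mild subtlety is to make sure one uses the \emph{trivial} direction $\|\cdot\|_{R^{1,p}_*}\leq \|\cdot\|_{R^{1,p}}$ of Proposition~\ref{prop:R*=R} in the first chain and the \emph{nontrivial} direction in the second, so that the factor $\sqrt{n}$ appears exactly once on each side rather than being duplicated.
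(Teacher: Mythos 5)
Your proposal is correct and coincides with the paper's own argument: the theorem is stated there as an immediate consequence of Propositions~\ref{prop:R*=R} and~\ref{prop:W*=R*}, chained exactly as you describe. Your bookkeeping of the constants (using the trivial inequality $\|\cdot\|_{R^{1,p}_*}\leq\|\cdot\|_{R^{1,p}}$ in one chain and the $\sqrt{n}$-direction in the other) is the right way to see that each side picks up only a single factor of $\sqrt{n}$.
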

It has been shown in \cite{CJPA2020} (see also \cite{Evs:2020}) that $W^{1,p}(\Omega ; V)=R^{1,p}(\Omega ; V)$ if and only if $V$ has the Radon-Nikodým property. Concerning Theorem~\ref{thm:W=Rquant}, it seems to be a natural conjecture that conversely the equality $W^{1,p}_*(\Omega ; V^*)=R^{1,p}(\Omega ; V^*)$ implies that $V$ is separable.

\section{Metric space valued Sobolev maps}
\label{sec:4}
Throughout this section let $\Omega\subset \mathbb{R}^n$ be a bounded domain, $X=(X,d)$ be a complete metric space and $p\in [1,\infty)$.
\subsection{The Sobolev space $W^{1,p}_*(\Omega ;X)$} \label{sec:4.1}
The \emph{Reshetnyak-Sobolev space} can be defined as
\begin{equation}
\label{def:R-met}
R^{1,p}(\Omega ; X):=\{f\colon \Omega \to X \ | \ \iota \circ f \in R^{1,p}(\Omega ; V)\}
\end{equation}
where $\iota \colon X \to V$ is any fixed isometric embedding of $X$ into a Banach space $V$. By the following example such embedding $\iota$ always exists.
\begin{ex}
Let $X$ be a metric space. Denote by $\ell^\infty(X)$ the Banach space of bounded functions $f\colon X\to \mathbb{R}$ with norm given by
\[
||f||_\infty:= \sup_{z\in X} |f(z)|.
\]
Then for given $z_0\in X$ the function $\bar{\kappa} \colon X \to \ell^\infty(X)$ given by 
\[
(\bar{\kappa}(z))(w):=d(z,w)-d(w,z_0)
\]
defines an isometric embedding, see e.g.\ \cite[p.\ 5]{Hei:03}.
\end{ex}
Furthermore, under the present assumption that $\Omega$ is bounded, the definition of $R^{1,p}(\Omega ;X)$ does not depend on the chosen embedding~$\iota$ and is equivalent to the original definition by Reshetnyak, see \cite[Lemma 2.16]{HT:08} and \cite[Theorem 5.1]{Reshetnyak97}. Thus Theorem~\ref{theorem:W=R} has the following consequence.
\begin{thm}
\label{thm:W=Rmet}
Let $\Omega \subset \mathbb{R}^n$ be a bounded domain, $X$ be a complete metric space, $Y$ be a separable Banach space and $\iota \colon X \to Y^*$ be an isometric embedding. Then
\begin{equation}
    R^{1,p}(\Omega ;X)=\left\{ f \colon \Omega \to X \ |\ \iota\circ f \in W^{1,p}_*(\Omega ; Y^*) \right\}.
\end{equation}
\end{thm}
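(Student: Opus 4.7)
The plan is to combine Theorem~\ref{theorem:W=R} with the embedding-independence of the Reshetnyak--Sobolev space that was just recalled in the paragraph preceding the statement. Concretely, since $\iota \colon X \to Y^*$ is an isometric embedding into the Banach space $Y^*$, definition~\eqref{def:R-met} (with the particular choice $V = Y^*$ and the particular embedding $\iota$) yields
\[
R^{1,p}(\Omega; X) = \left\{ f \colon \Omega \to X \ \big|\ \iota \circ f \in R^{1,p}(\Omega; Y^*) \right\}.
\]
Here I rely on the cited fact (\cite[Lemma 2.16]{HT:08} and \cite[Theorem 5.1]{Reshetnyak97}) that under the standing assumption that $\Omega$ is bounded, the right-hand side does not depend on the choice of isometric embedding of $X$ into a Banach space.

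Then I would apply Theorem~\ref{theorem:W=R} to the separable Banach space $Y$: this gives the equality of Banach space valued Sobolev spaces
\[
R^{1,p}(\Omega; Y^*) = W^{1,p}_*(\Omega; Y^*).
\]
Substituting into the previous displayed equation immediately yields
\[
R^{1,p}(\Omega; X) = \left\{ f \colon \Omega \to X \ \big|\ \iota \circ f \in W^{1,p}_*(\Omega; Y^*) \right\},
\]
which is the desired identity.

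There is essentially no obstacle here, as the statement is just a packaging of two already established results. The only point that requires a brief verification is the embedding-independence of $R^{1,p}(\Omega; X)$, but this is precisely what the authors invoke from the literature in the paragraph right above the theorem, so there is nothing further to prove.
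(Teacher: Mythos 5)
Your argument is exactly the one the paper intends: the theorem is stated as an immediate consequence of Theorem~\ref{theorem:W=R} together with the embedding-independence of $R^{1,p}(\Omega;X)$ for bounded $\Omega$, which is precisely the two-step substitution you carry out. The proposal is correct and matches the paper's approach.
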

Certainly not every metric space $X$ isometrically embeds into the dual of a separable Banach space. A simple obstruction is the cardinality of $X$ which must be bounded above by $2^{2^\omega}$.
For a separable metric space $X$ however, due to the following example, there is always an isometric embedding as in Theorem~\ref{thm:W=Rmet}.
\begin{ex}
Let $X$ be a separable metric space and $(z_i)_{i\in \mathbb{N}}$ be a dense sequence of points in $X$. Denote $\ell^\infty:=\ell^\infty(\mathbb{N})$. Then $\ell^\infty$ is the dual of the separable Banach space $\ell^1:=\ell^1(\mathbb{N})$. The function $\kappa \colon X \to \ell^\infty$ given by \[\kappa(z):=\left(d(z,z_i)-d(z_i,z_1)\right)_{i\in \mathbb{N}}\]
is called the \emph{Kuratowski embedding} of $X$. It is not hard to check that $\kappa$ defines an isometric embedding, see e.g.\ \cite[p.\ 11]{Hei:03}. 
\end{ex}
Thus, for a bounded domain $\Omega$ and a complete separable metric space $X$, one can define
\begin{equation}
\label{eq:def-W*-met}
W^{1,p}_*(\Omega ; X):=\left\{ f \colon \Omega \to X \ |\ \kappa \circ f \in W^{1,p}_*(\Omega ; \ell^\infty) \right\}
\end{equation}
and deduce from Theorem~\ref{thm:W=Rmet} that $W^{1,p}_*(\Omega ; X)=R^{1,p}(\Omega ; X)$. The assumption that $\Omega$ is bounded is needed to ensure that $W^{1,p}_*(\Omega ; X)$ is well-defined by means of \eqref{eq:def-W*-met} and does not depend on the concrete choice of Kuratowski embedding. For a non-separable complete metric space $X$ one can define $W^{1,p}_*(\Omega ;X)$ as the union of the spaces $W^{1,p}_*(\Omega ;S)$ where $S$ ranges over all separable closed subsets of $X$. Since Sobolev functions are essentially separably valued, also for such non-separable $X$, Theorem~\ref{thm:W=Rmet} implies that $W^{1,p}_*(\Omega ;X)\subset R^{1,p}(\Omega ;X)$ and that every function in $R^{1,p}(\Omega ;X)$ has a representative in $W^{1,p}_*(\Omega ;X)$.
\subsection{The Sobolev space $W^{1,p}(\Omega, X)$}
\label{sec:4.2}
The aim of this subsection is to prove Theorem~\ref{thm:triv}. To this end let $X$ be a complete separable metric space, $(x_i)_{i\in \mathbb{N}}$ be a dense sequence of points in $X$ and $\kappa \colon X\to \ell^\infty$ be the corresponding Kuratowski embedding. The key step for the proof is the following lemma.
\begin{lem}
\label{lem:nonexcurve}
 If $\gamma \colon [a,b]\to \kappa(X)\subset \ell^\infty$ is a non-constant absolutely continuous curve then the weak* derivative $ \gamma'\colon [a,b]\to \ell^\infty$ is not essentially separably valued.
\end{lem}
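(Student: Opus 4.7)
The plan is to argue by contradiction. Writing $\gamma = \kappa \circ f$ for an absolutely continuous curve $f \colon [a,b] \to X$ (using that $\kappa$ is an isometric embedding onto $\kappa(X)$), Lemma~\ref{deriv:charac} yields $\|\gamma'(t)\|_\infty = |f'|(t) = s_\gamma'(t)$ for almost every $t$. Since $\gamma$ is non-constant, the set $E := \{t : \gamma \text{ is weak* differentiable at } t \text{ and } |f'|(t) > 0\}$ has positive measure. Suppose for contradiction that $\gamma'$ is essentially separably valued. Combined with the weak* measurability of $\gamma'$ (Lemma~\ref{lemma:2.8}), Lemma~\ref{lem:pettis} implies that $\gamma'$ is Borel measurable into $\ell^\infty$, so Theorem~\ref{thm:approx} gives that $\gamma'$ is approximately continuous at almost every $t\in [a,b]$.

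A key preliminary observation is that for each $t \in E$ one must have $f(t) \notin \{x_i\}_{i\in \mathbb{N}}$: otherwise, if $f(t) = x_{i_0}$, then the coordinate function $s\mapsto d(f(s), x_{i_0})$ would attain its minimum $0$ at $s = t$ and have one-sided derivatives $\pm |f'|(t)\neq 0$ at $t$, contradicting its differentiability. Thus the relevant coordinates are indexed by $x_i$ close to points on the curve $f$ itself, and we have considerable freedom in placing such $x_i$ by density of $(x_i)$ in $X$.

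I would then select a ``generic'' point $t_0 \in E$ that is simultaneously an approximate continuity point of $\gamma'$, a Lebesgue density point of $E$, and a Lebesgue point of $|f'|$. Set $v := |f'|(t_0) > 0$. For each $t \in E$ close to $t_0$, say $t > t_0$, the aim is to construct an index $i(t) \in \mathbb{N}$ with $|\gamma_{i(t)}'(t_0) - \gamma_{i(t)}'(t)| \geq v$, so that $\|\gamma'(t)-\gamma'(t_0)\|_\infty \geq v$ on a set of $t$'s near $t_0$ of density close to $1$---contradicting approximate continuity of $\gamma'$ at $t_0$. The construction: pick a ``midpoint'' $\tau_t \in (t_0, t) \cap E$ at which $f$ is metric differentiable with $|f'|(\tau_t)$ close to $v$ (such $\tau_t$ exists in $(t_0,t)$ for a.e. small $t-t_0$ thanks to the density properties of $t_0$), and then pick $x_{i(t)}$ from the dense sequence with $d(x_{i(t)}, f(\tau_t))$ much smaller than $t - t_0$. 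Because $f$ is near-geodesic between $t_0, \tau_t, t$ (a consequence of metric differentiability at $t_0,\tau_t,t$ together with the Lebesgue point property of $|f'|$), one expects the function $s \mapsto d(f(s), x_{i(t)})$ to decrease at rate $\approx v$ on $[t_0, \tau_t]$ and increase at rate $\approx v$ on $[\tau_t, t]$, forcing $\gamma_{i(t)}'(t_0) \approx -v$ and $\gamma_{i(t)}'(t) \approx +v$.

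The main obstacle is to rigorously extract $\gamma_{i(t)}'(t_0)\approx -v$ (and analogously $\gamma_{i(t)}'(t)\approx +v$) from the heuristic above. The direct computation of the difference quotient $(\gamma_{i(t)}(t_0+h') - \gamma_{i(t)}(t_0))/h'$ yields $\approx -v$ only in the restricted range $d(x_{i(t)}, f(\tau_t)) \ll h' \ll \tau_t - t_0$; for $h'$ smaller than $d(x_{i(t)},f(\tau_t))$ it is only bounded by the trivial $[-v,v]$. Overcoming this gap requires combining the integral relation $\gamma_{i(t)}(s) - \gamma_{i(t)}(t_0) = \int_{t_0}^s \gamma_{i(t)}'(r)\,dr$, Lebesgue point properties of the coordinate derivatives at $t_0$, and the freedom to shrink $d(x_{i(t)}, f(\tau_t))$ as needed. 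This is the delicate step exploiting the ``strange analytic properties of the Kuratowski embedding'' alluded to in the paper, and where the analytic details of the proof concentrate.
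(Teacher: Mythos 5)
Your high-level strategy --- reduce the claim to failure of approximate continuity via Lemma~\ref{lem:pettis} and Theorem~\ref{thm:approx}, then exploit the Kuratowski coordinates associated with well-chosen points $x_i$ of the dense sequence --- is the same as the paper's. But the step you yourself flag as ``the main obstacle'' is a genuine gap, and it cannot be closed in the form you propose. You try to pin down the \emph{pointwise} values $\gamma_{i(t)}'(t_0)\approx -v$ and $\gamma_{i(t)}'(t)\approx +v$. The derivative of $s\mapsto d(f(s),x_{i(t)})$ at the single point $t_0$ is an infinitesimal quantity, whereas the geometric constraint coming from the position of $x_{i(t)}$ only acts at scales $h'\gtrsim d(x_{i(t)},f(\tau_t))$ and, worse, at scales $h'\gtrsim o(\tau_t-t_0)$, where $o(\tau_t-t_0)$ is the (fixed, once $\tau_t$ is chosen) error in the metric differentiability estimate $d(f(t_0),f(\tau_t))=v(\tau_t-t_0)+o(\tau_t-t_0)$. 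As $h'\to 0$ these error terms divided by $h'$ are uncontrolled, so the limit defining $\gamma_{i(t)}'(t_0)$ is not constrained at all; your computation only shows that the \emph{average} of $\gamma_{i(t)}'$ over $[t_0,t_0+h']$ is close to $-v$ for $h'$ in an intermediate window. No amount of shrinking $d(x_{i(t)},f(\tau_t))$ or invoking Lebesgue points repairs this, because the curve may behave arbitrarily at scales below that window without affecting any of your estimates, and this changes the pointwise derivative at $t_0$.

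The paper's proof avoids evaluating the coordinate derivative at any specific point. After passing to the unit-speed parametrization $\bar\gamma$ (so that $\|\bar\gamma'\|_\infty=1$ a.e.\ by Lemma~\ref{deriv:charac}), it fixes $t_0$, a scale $h$, and an index $i$ with $\|\kappa(x_i)-\bar\gamma(t_0)\|_\infty\le h/4$, and then bounds the \emph{integrals} $\int_{t_0}^{t_0+h}\bar\gamma_i'\,\textrm{d}t\ge h/4$ and $\int_{t_0-h}^{t_0}\bar\gamma_i'\,\textrm{d}t\le -h/4$ using only the triangle inequality and the fundamental theorem of calculus for the scalar coordinate. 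Since $|\bar\gamma_i'|\le 1$ a.e., a Chebyshev argument produces sets $F_h^{+}\subset(t_0,t_0+h)$ and $F_h^{-}\subset(t_0-h,t_0)$, each of measure at least $h/8$, on which $\bar\gamma_i'\ge 1/8$ and $\bar\gamma_i'\le -1/8$ respectively; points of $F_h^{+}$ and $F_h^{-}$ then have derivatives at mutual $\ell^\infty$-distance at least $1/4$, so whatever value $\bar\gamma'(t_0)$ takes, one of the two sets witnesses failure of approximate continuity at $t_0$. This ``two-sided integral plus Chebyshev'' device is the missing idea: it replaces your pointwise claims at $t_0$ and $t$ by positive-measure sets on either side of $t_0$, needs no midpoint $\tau_t$ (placing $x_i$ near $\gamma(t_0)$ itself suffices), and makes the preliminary observation about $f(t)\notin\{x_i\}$ unnecessary. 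A final point your outline omits: the paper still has to transfer the conclusion from $\bar\gamma$ back to $\gamma$ via the chain rule $\gamma'=(\bar\gamma'\circ s_\gamma)\cdot s_\gamma'$ and the area formula for $s_\gamma$, which is a short but non-trivial step if one works, as the paper does, with the arclength reparametrization.
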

\begin{proof}
Since $\gamma$ is non-constant we have $l:=l(\gamma)>0$. As in Section~\ref{sec:abs-curves} we factorize $\gamma=\bar{\gamma}\circ s_\gamma$ where $\bar{\gamma} \colon [0,l]\to \kappa (X)$ is the unit-speed parametrization of $\gamma$ and $s_\gamma \colon [a,b] \to [0,l]$ is the length function of $\gamma$. First we show that $\bar{\gamma}'\colon [0,l] \to \ell^\infty$ is not essentially separably valued.

By Lemma~\ref{deriv:charac} for a.e.\ $t\in [0,l]$ one has that
\begin{equation}
\label{eq:md=1}
   ||\bar{\gamma}'(t)||_\infty=\lim_{h\to 0} \frac{||\bar{\gamma}(t+h)-\bar{\gamma}(t)||_\infty}{|h|}=1.
\end{equation}
Let $E$ be the set of points $t_0\in (0,l)$ at which $\bar{\gamma}$ is weak* differentiable and \eqref{eq:md=1} holds. By Theorem~\ref{thm:approx}, to show that $\bar{\gamma}'$ is not essentially separably valued, it suffices to prove that $\bar{\gamma}'$ is not approximately continuous at every $t_0\in E$.

So fix $t_0 \in E$ and let $h_0 >0$ be so small that for any $h\in \mathbb{R}$ with $|h|\leq h_0$ one has
\begin{equation}
\label{eq:choice1}
\frac{1}{2} \cdot |h| < ||\gamma(t_0+h)-\gamma(t_0)||_\infty .
\end{equation}
Further fix some arbitrary $0<h<h_0$ and accordingly choose $i\in \mathbb{N}$ such that
\begin{equation}
\label{eq:choice2}
||\kappa (x_i)-\gamma(t_0)||_\infty \leq \frac{1}{4} \cdot h.
\end{equation}
By Lemma~\ref{lemma:2.8} for every point $t\in [0,l]$ at which $\bar{\gamma}$ is weak* differentiable one has
\begin{equation}
\label{eq:coord-rep}
\bar{\gamma}'(t)=(\bar{\gamma}'_i(t))_{i\in \mathbb{N}}
\quad \textnormal{where } \bar{\gamma}(t)=(\bar{\gamma}_i(t))_{i\in \mathbb{N}}
\end{equation}is the coordinate representation of $\bar{\gamma}$. From the fundamental theorem of calculus, the definition of the Kuratowski embedding, \eqref{eq:choice1} and \eqref{eq:choice2} it follows that
\begin{align}
\label{eq:lb}
    \int^{t_0 +h}_{t_0} \bar{\gamma}_i'(t) \ \textrm{d}t&=\bar{\gamma}_i(t_0+h)-\bar{\gamma}_i(t_0)\\
    &=||\bar{\gamma}(t_0+h)-\kappa (x_i)||_\infty-||\bar{\gamma}(t_0)-\kappa(x_i)||_\infty \\ &\geq \frac{1}{4}\cdot h
\end{align}
Since $|\bar{\gamma}_i'(t)|\leq 1$ for a.e.\ $t$, this implies that
\begin{equation}
\label{eq:Fh+}
    \mathcal{L}^1\left( F^+_h\right)\geq \frac{1}{8} \cdot h \quad 
\textnormal{where }
    F^+_h:=\left\{ t\in (t_0,t_0+h) \ :\  \bar{\gamma}_i'(t)\geq \frac{1}{8}\right\}.
\end{equation}
Similarly
\begin{equation}
\int_{t_0-h}^{t_0} \bar{\gamma}_i'(t) \ \textrm{d} t\leq -\frac{1}{4} \cdot h
\end{equation}
and hence
\begin{equation}
\label{eq:Fh-}
    \mathcal{L}^1\left(F^-_h\right)\geq \frac{1}{8} \cdot h \quad 
\textnormal{where }
 F^-_h:=\left\{ t\in (t_0-h,t_0) \ :\  \bar{\gamma}_i'(t)\leq -\frac{1}{8}\right\}.
\end{equation}
Note that for every $t^+\in F_h^+\cap E$ and $t^- \in F_h^-\cap E$ one has
\begin{equation}
\label{eq:apart}
||\bar{\gamma}'(t^+)-\bar{\gamma}'(t^-)||_\infty \geq |\bar{\gamma}_i'(t^+)-\bar{\gamma}_i'(t^-)|\geq \frac{1}{4}.
\end{equation}
Since $0<h<h_0$ was arbitrary, \eqref{eq:Fh+}, \eqref{eq:Fh-} and \eqref{eq:apart} together imply that $\bar{\gamma}'$ cannot be approximately continuous at $t_0$. In turn, because $t_0\in E$ was arbitrary, we conclude from Theorem~\ref{thm:approx} that $\bar{\gamma}'$ is not essentially separably valued.

Now let $N\subset [a,b]$  be an arbitrary nullset. We need to show that $\gamma'([a,b]\setminus N)$ is not separable. By Lemma~\ref{lemma:2.8}, after possibly passing to a larger null set, we may assume that for every $t\in [a,b]\setminus N$ the curve $\gamma$ is weak* differentiable at $t$ and the function $s_\gamma$ is differentiable at $t$. Note that \begin{equation}
\label{eq:areaformula}
\mathcal{L}^1(s_\gamma(A))=\int_A s_\gamma'(t)\ \textrm{d} t
\end{equation}
for every measurable subset $A\subset [a,b]$. Thus, we may further assume that for every $t\in [a,b]\setminus N$ either $\bar{\gamma}$ is weak* differentiable at $s_\gamma(t)$ or $s_\gamma'(t)=0$. In particular, it follows that \begin{equation}
\label{eq:chain-n-p}
(\bar{\gamma}'\circ s_\gamma)(t) \cdot s_\gamma'(t)=\gamma'(t)
\end{equation}
on $[a,b]\setminus N$. 
By \eqref{eq:areaformula} one has that $M:=s_\gamma(N)\cup s_\gamma(\{s_\gamma'=0\})$ is a null set and hence $\bar{\gamma}'([0,l]\setminus M)$ is not separable. On the other hand, $s_\gamma$ is surjective and hence by \eqref{eq:chain-n-p} it follows that \[\bar{\gamma}'([0,l]\setminus M)\subset \langle \gamma'([a,b]\setminus N) \rangle_{\mathbb{R}} \]
where $\langle \gamma'([a,b]\setminus N) \rangle_{\mathbb{R}}$ denotes the linear span of $\gamma'([a,b]\setminus N)$ in $\ell^\infty$.
In particular, the linear span of $\gamma'([a,b]\setminus N)$ is not separable and hence also $\gamma'([a,b]\setminus N)$ itself cannot be separable
\end{proof}
Recall that we have defined
\begin{equation}
\label{eq:def-med}
W^{1,p}(\Omega ;X):=\{f \colon \Omega \to X \ : \ \kappa \circ f \in W^{1,p}(\Omega ;\ell^\infty)\}.
\end{equation}
\begin{proof}[Proof of Theorem~\ref{thm:triv}]
Let $f\colon \Omega \to X$ be almost everywhere constant. Then also $\kappa \circ f$ is almost everywhere constant. Hence $\kappa \circ f$ is measurable and essentially separably valued. Since $\Omega$ is bounded,  one has that $x\mapsto ||f(x)||_\infty$ lies in $L^p(\Omega)$. Thus, choosing $f_j$ identically zero for each $j$, we conclude that $\kappa \circ f \in W^{1,p}(\Omega ; \ell^\infty)$ and hence that $f\in W^{1,p}(\Omega ;X)$.

For the other inclusion let $f\in W^{1,p}(\Omega ;X)$. Then, by definition, $h:=\kappa \circ f$ lies in $W^{1,p}(\Omega ; \ell^\infty)$. Trivially this implies that $h \in W^{1,p}_*(\Omega ;\ell^\infty)$ and that $\partial_j h$ lies in $L^p(\Omega ;X)\subset L^p_*(\Omega ;X)$ for each $j$.  Since $W^{1,p}_*(\Omega ; \ell^\infty)$ equals $R^{1,p}_*(\Omega ; \ell^\infty)$, Lemma~\ref{lem:lemma2.13} implies that for each $j$ the function $h$ has a representative $\widetilde{h}^j$ that is absolutely continuous on almost every compact line segment parallel to the $x_j$-axis. In particular, there is a nullset $N\subset \Omega$ such that $\partial_j h(\Omega \setminus N)$ is separable for every $j$. Note that, since $X$ is complete, for almost every compact line segment $l\colon [a,b] \to \Omega$ parallel to the $x_j$-axis the image $\widetilde{h}^j \circ l([a,b])$ must be contained in $\kappa(X)$. Further the proof of Proposition~\ref{prop:W*=R*} shows that, possibly enlarging $N$, we can assume that for each $j$ one has \[\partial_j h (x)=\frac{\partial \widetilde{h}^j}{\partial x_j}(x)\] for every $x\in\Omega\setminus N$.

Assume $f$ was not almost everywhere constant. Since $\Omega$ is connected, this implies that there is some $j$ such that not for almost every line segment parallel to the $x_j$-axis the restriction of $f$ to the line segment is constant. Hence we can find a line segment $l\colon [a,b]\to \Omega$ such that
\begin{enumerate}[(i)]
    \item $\mathcal{H}^1(l([a,b]\cap N))=0$,
    \item  $\widetilde{h}^j\circ l([a,b])\subset \kappa(X)$, and
    \item  $\widetilde{h}^j\circ l$ is absolutely continuous and non-constant.
\end{enumerate}
By Lemma~\ref{lem:nonexcurve}, $(\widetilde{h}^j\circ l)'\colon [a,b] \to X$ cannot be essentially separably valued. This gives a contradiction because 
\[
(\widetilde{h}^j\circ l)'(t)=\partial_j h(l(t))
\]
for every $t\in l([a,b])\setminus N$ and $\partial_j h(\Omega \setminus N)$ is separable. 
\end{proof}
Note that, by Theorem~\ref{thm:triv}, if $X$ is a separable Banach space then the definition of $W^{1,p}(\Omega ;X)$ given in \eqref{eq:def-med} is not compatible with the one given in Definition~\ref{fd}. For example, most trivially, one may consider the case $X=\mathbb{R}$ where Definition~\ref{fd} gives the classical Sobolev space $W^{1,p}(\Omega)$.

\bibliographystyle{plain}
\bibliography{bibliography}
\end{document}